\swapnumbers \numberwithin{equation}{section}
\theoremstyle{plain}
\newtheorem{thm}{Theorem}[section]
\newtheorem{prop}[thm]{Proposition}
\newtheorem{cor}[thm]{Corollary}
\theoremstyle{definition}
\newtheorem{remark}[thm]{Remark}
\DeclareMathOperator{\as}{{\rm asdim}}
\def\Q{{\mathbb Q}}
\def\1{\hbox{\rm\rlap {1}\hskip.03in{\rom I}}}
\def\Bbbone{{\rm1\mathchoice{\kern-0.25em}{\kern-0.25em}
{\kern-0.2em}{\kern-0.2em}I}}
\long\def\forget#1\forgotten{} %
\newcommand\ver[1]{\marginpar{\tiny Changed in Ver \VER}}
\date{\today}
\begin{document}

\title[Embedding]{Embedding of  Higson compactification into the product of adelic solenoids.}

%
\author[A.~Dranishnikov]{Alexander  Dranishnikov}
\author[J.~Keesling]{James  Keesling} 

\thanks{The first author was supported by Simons Foundation}

\address{Alexander N. Dranishnikov, Department of Mathematics, University
of Florida, 358 Little Hall, Gainesville, FL 32611-8105, USA}
\email{dranish@ufl.edu}
\address{James E. Keesling, Department of Mathematics, University
of Florida, 358 Little Hall, Gainesville, FL 32611-8105, USA}
\email{kees@ufl.edu}

\subjclass[2000]
{Primary 53C23, 54F15; 
Secondary 22C05, 54F15, 55N05}

\maketitle

\begin{abstract}
The Higson compactification of any simply connected proper geodesic  metric space admits an embedding into a product of adelic solenoids that induces an isomorphism of 1-dimensional  cohomology.
\end{abstract}

\section{Introduction}

The Higson corona of a metric space was introduced by Higson and Roe~\cite{Ro1},\cite{Ro2} to bring an approximation of the $K$-theory of its Roe algebra $C_{roe}^*(X)$ by the topological $K$-theory of the corona.
The $K$-theory of the Roe algebra is the recipient in the coarse assembly map and therefore, it plays an important role in a coarse approach to the Novikov and the Baum-Connes conjectures.
It turns out that for injectivity of  the coarse assembly map it is sufficient to have the Higson compactification acyclic~\cite{Ro1}. James Keesling noticed that the Higson compactification
of $\mathbb R^n$ is not acyclic in dimension one~\cite{K}. Later Dranishnikov and Ferry have constructed nontrivial second cohomology classes of the Higson compactification of $\mathbb R^2$~\cite{DF}.

It turns out that all the above cohomology classes of the Higson compactifications are divisible. This leaves a hope that all cohomology groups of the Higson compactification  are vector spaces over $\mathbb Q$. In that case the Higson compactification would be acyclic with respect to any finite coefficient group.
In~\cite{DFW} Dranishnikov, Ferry, and Weinberger proved the Novikov Higher Signature conjecture for manifolds whose fundamental group $\Gamma$ admits a finite classifying space $B\Gamma$ with trivial \v{C}ech cohomology of the Higson compactification $\overline{E\Gamma}$ of the universal cover $E\Gamma$ with $\mathbb Z_2$ coefficients.
In the same paper it was proven that $\check H^*(\overline{E\Gamma};\mathbb Z_p)=0$ for all prime $p$ whenever the asymptotic dimension $\as\Gamma$ of the group $\Gamma$  is finite.

In this paper we construct an embedding of $\overline{E\Gamma}$ into the product of adelic solenoids $\prod\Sigma_p$ which induces isomorphism of rational cohomology. In particular, our embedding implies that $\check H^1(\overline{E\Gamma})=\oplus\mathbb Q$ and, hence, $\check H^1(\overline{E\Gamma};\mathbb Z_p)=0$ for all $p$.

We think that it is a reasonable  to believe that our embedding induces an epimorphism of
cohomology groups for higher dimensions. This conjecture seems plausible  in the case when $\dim\overline{E\Gamma}<\infty$. We note that from~\cite{DFW} one can derive our conjecture for groups with $\as\Gamma<\infty$. We recall that always $\dim\nu(E\Gamma)\le\as\Gamma$~\cite{DKU} and  $\dim\nu(E\Gamma)=\as\Gamma$~\cite{Dr} when the latter is finite. Here $\nu(E\Gamma)=\nu\Gamma$ is the Higson corona: $\nu(E\Gamma)=\overline{E\Gamma}\setminus E\Gamma$.
There is a long open question whether  there exists a  metric spaces $X$ with $\dim\nu X<\infty$  and $\as\Gamma=\infty$. Perhaps a more challenging related question here is whether
the Novikov conjecture holds true for 
groups with $\dim\nu\Gamma<\infty$. We recall that the Novikov conjecture for groups  with $\as\Gamma<\infty$  was proven more than two decades ago in~\cite{Yu}.

\section{Preliminaries}

\subsection{Higson compactification} Let $(X,d)$ be a metric space with a base point $x_0$. By $B(r,x)$ we denote the open ball of radius $r$ with the center in $x$.
We call a map  $f:X\to Y$ to a metric space $Y$ {\em slowly oscillating} if for any $r>0$ the diameter of $f(B(x,r))$ tends to 0 as $d(x,x_0)$ tends to infinity.
The Higson compactification $\bar X$ of $X$ is characterized by bounded slowly oscillating continuous functions $f:X\to\mathbb R$. If $C_h$ is the ring of all such functions, then $\bar X$ is homeomorphic to the closure of $X$ under the embedding 
$$(f)_{f\in C_h}:X\to\prod_{f\in C_h}[\inf f,\sup f].$$
We use notation $\nu X=\bar X\setminus X$ for the Higson corona. The following is straightforward.
\begin{prop}\label{Higson}
For a metric space $X$ a continuous map $f:X\to C$ to a compact metric space $C$ extends continuously to the Higson corona if and only if
$f$ is slowly oscillating.
\end{prop}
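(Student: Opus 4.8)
The plan is to deduce both implications directly from the defining embedding $e=(g)_{g\in C_h}\colon X\to\prod_{g\in C_h}[\inf g,\sup g]$ whose closure is $\bar X$ (interpreting ``extends to the corona'' as the existence of a continuous extension over all of $\bar X$, the natural reading since $X$ is dense). Two facts from this construction are used repeatedly: each $g\in C_h$ extends to a continuous $\bar g\colon\bar X\to\R$ (it is a coordinate of $e$), and the family $\{\bar g\}_{g\in C_h}$ separates the points of $\bar X$.

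For the \emph{if} direction, suppose $f\colon X\to C$ is continuous and slowly oscillating. I would embed the compact metric space $C$ into the Hilbert cube $\prod_n[0,1]$ and write the composite as $(f_n)_n$ with $f_n\colon X\to[0,1]$ the $n$-th coordinate. Each $f_n$ is bounded and continuous, and $f_n=h_n\circ f$ for a continuous $h_n\colon C\to[0,1]$; since $h_n$ is uniformly continuous (being continuous on the compact space $C$), $f_n$ is again slowly oscillating, so $f_n\in C_h$. By the construction of $\bar X$ each $f_n$ extends to a continuous $\overline{f_n}\colon\bar X\to[0,1]$, and these assemble into a continuous $\bar f=(\overline{f_n})_n\colon\bar X\to\prod_n[0,1]$. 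Since $\bar f(X)=f(X)\subseteq C$, the set $X$ is dense in $\bar X$, and $C$ is closed in the cube, continuity forces $\bar f(\bar X)\subseteq C$; this is the desired extension.

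For the \emph{only if} direction, suppose $f$ extends to a continuous $\bar f\colon\bar X\to C$ but is \emph{not} slowly oscillating. Then there are $r>0$, $\varepsilon>0$, points $x_n$ with $d(x_n,x_0)\to\infty$, and points $y_n,z_n\in B(x_n,r)$ with $d_C(f(y_n),f(z_n))\ge\varepsilon$, where $d_C$ denotes the metric on $C$. Using compactness of $\bar X$ I pass to subsequences with $y_n\to\eta$ and $z_n\to\zeta$ in $\bar X$. The crucial point is that $\eta=\zeta$: since $d(y_n,z_n)<2r$ and $d(y_n,x_0)\to\infty$, slow oscillation of each $g\in C_h$ gives $|g(y_n)-g(z_n)|\to0$, so $\bar g(\eta)=\bar g(\zeta)$ for every $g$; as $\{\bar g\}_{g\in C_h}$ separates points of $\bar X$, this forces $\eta=\zeta$. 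Continuity of $\bar f$ then yields $f(y_n)=\bar f(y_n)\to\bar f(\eta)$ and $f(z_n)=\bar f(z_n)\to\bar f(\zeta)=\bar f(\eta)$, so $d_C(f(y_n),f(z_n))\to0$, contradicting the inequality $\ge\varepsilon$.

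The main (and essentially the only nontrivial) point is the separation step above: two sequences that remain within a bounded distance of one another must converge to the same point of $\bar X$. This is precisely where the defining property of $C_h$ is used, that its members are slowly oscillating and jointly separate the points of $\bar X$. Everything else is bookkeeping: in the \emph{if} direction one only needs that a continuous function on the compact space $C$ is uniformly continuous and that $C$ is closed in the ambient cube.
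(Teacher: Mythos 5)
The paper offers no proof of this proposition at all---it is introduced with the remark ``The following is straightforward''---so there is no argument of the paper to compare yours against; your writeup supplies what the paper omits, and its substance is correct. The \emph{if} direction (push $C$ into the Hilbert cube, check each coordinate $h_n\circ f$ is bounded, continuous and slowly oscillating via uniform continuity of $h_n$, extend coordinatewise using the defining embedding, and use density of $X$ plus closedness of $C$ in the cube to land back in $C$) is complete. The key idea of the \emph{only if} direction---that two sequences staying within bounded distance of one another must have the same cluster points in $\bar X$, because every $g\in C_h$ is slowly oscillating and the family $\{\bar g\}$ jointly separates points of $\bar X$---is exactly the right mechanism.

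One step, however, fails as literally written: you ``pass to subsequences with $y_n\to\eta$ and $z_n\to\zeta$ in $\bar X$.'' The space $\bar X$ is compact Hausdorff but in general badly non-metrizable (the paper itself uses that $\nu X$ has weight $\mathfrak{c}$), and compactness does not yield sequential compactness---in $\beta\mathbb N$, for instance, no nontrivial sequence converges. The repair is immediate: take a subnet of the pairs $(y_n,z_n)$ converging in $\bar X\times\bar X$ to $(\eta,\zeta)$; every limit computation you make (that $|g(y_n)-g(z_n)|\to 0$ forces $\bar g(\eta)=\bar g(\zeta)$, and that $\bar f$ is continuous along the net) is valid verbatim for nets, so the contradiction with $d_C(f(y_n),f(z_n))\ge\varepsilon$ survives. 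Alternatively, you can avoid convergence in $\bar X$ altogether: pass to subsequences of $f(y_n)$ and $f(z_n)$ in the compact \emph{metric} space $C$, with limits $a\ne b$; choose continuous $h:C\to[0,1]$ with $h(a)=0$, $h(b)=1$; then $h\circ f=(h\circ\bar f)|_X$ extends over $\bar X$, and since $C_h$ is sup-norm closed, Stone--Weierstrass gives $C(\bar X)|_X=C_h$, so $h\circ f$ is slowly oscillating---contradicting $|h(f(y_n))-h(f(z_n))|\to 1$ while $d(y_n,z_n)<2r$ and $d(y_n,x_0)\to\infty$. Either fix is a one-line change; with it, your proof is sound.
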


A metric space $X$ is called {\em proper} if the distance to each point in $X$ is a proper function. A metric space $X$ is called {\em geodesic} if for any pair of points $x,y\in X$ there is an isometric embedding $\xi:[0,d(x,y)]\to X$ with $\xi(0)=x$ and $\xi(d(x,y))=y$.

The following can be found in~\cite{DKU}.
\begin{prop}\label{Higson-2}
In a proper metric space $X$ closed subsets $Y$ and $Y'$ have common points in the Higson corona, $Cl_{\bar X}Y)\cap Cl_{\bar X}(Y')\cap\nu X\ne \emptyset$,  if and only if
$d(Y\setminus B(x_0,r),Y')\to\infty$ as $r\to\infty$.
\end{prop}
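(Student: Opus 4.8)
The plan is to prove the stated equivalence by reducing the corona statement to the existence of a single slowly oscillating function separating the tails of $Y$ and $Y'$, and then matching that with the displayed metric condition. First I would use that $\bar X$ is compact Hausdorff and that $Cl_{\bar X}(Y)\cap\nu X$ and $Cl_{\bar X}(Y')\cap\nu X$ are closed in it. By Urysohn's lemma these two sets are disjoint if and only if there is a continuous $G\colon\bar X\to[0,1]$ taking the value $0$ on the first and $1$ on the second. Restricting to $X$ and invoking \propref{Higson}, $G|_X$ is a bounded slowly oscillating function; conversely any slowly oscillating $f\colon X\to[0,1]$ extends over $\nu X$, so its behaviour on the two sets is governed by the limits of $f$ along the tails of $Y$ and of $Y'$. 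This shows that $Cl_{\bar X}(Y)\cap Cl_{\bar X}(Y')\cap\nu X=\emptyset$ is equivalent to the existence of a slowly oscillating $f\colon X\to[0,1]$ with $f(y)\to0$ as $y\to\infty$ in $Y$ and $f(y')\to1$ as $y'\to\infty$ in $Y'$.

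Next I would prove that such a function exists exactly when $d(Y\setminus B(x_0,r),Y')\to\infty$. The implication from the function to the metric condition is the short direction: if $d(Y\setminus B(x_0,r),Y')$ failed to diverge, there would be $y_n\in Y$ with $d(y_n,x_0)\to\infty$ and $d(y_n,Y')\le M$; properness lets me pick $y_n'\in Y'$ with $d(y_n,y_n')\le M+1$ and with $d(y_n',x_0)\to\infty$ as well. Slow oscillation then forces $|f(y_n)-f(y_n')|\le\mathrm{diam}\,f(B(y_n,M+1))\to0$, contradicting $f(y_n)\to0$ and $f(y_n')\to1$. For the reverse implication I would build, out of the hypothesis that the tails diverge, a slowly oscillating $f$ that is near $0$ on the tail of $Y$ and near $1$ on $Y'$.

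Combining the two steps yields the equivalence: the corona closures are disjoint exactly when $d(Y\setminus B(x_0,r),Y')\to\infty$, hence they share a point of $\nu X$ exactly when this divergence fails. Two implementation points guide the details: since $\bar X$ need not be metrizable, the phrase ``$y\to\infty$ in $Y$'' should be understood through the slowly oscillating functions (equivalently, through neighbourhoods of the corona points in $Cl_{\bar X}(Y)$) rather than through sequences alone, and properness of $X$ is exactly what allows me to replace a distance bound $d(y_n,Y')\le M$ by an honest realizing point $y_n'\in Y'$ with $d(y_n',x_0)\to\infty$.

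I expect the main obstacle to be the reverse half of the second step: manufacturing from the divergence $d(Y\setminus B(x_0,r),Y')\to\infty$ a genuinely slowly oscillating separating function. The naive candidate $f(x)=d(x,Y')/(1+d(x,Y'))$ vanishes on $Y'$ and is close to $1$ on the tail of $Y$, but it need not belong to $C_h$, since $d(\,\cdot\,,Y')$ may oscillate on balls of fixed radius far from $Y$. The real work is to damp the construction, composing $d(\,\cdot\,,Y')$ with a slowly growing cutoff and using properness to bound the oscillation of $f$ over each $B(x,r)$ so that it decays as $d(x,x_0)\to\infty$, while still retaining the prescribed values along the two tails.
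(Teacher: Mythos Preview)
The paper does not prove this proposition; it simply records that the result ``can be found in~\cite{DKU}''. So there is no proof in the paper to compare against, and your proposal has to be judged on its own.

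Your plan is the standard one and it is sound. The reduction via Urysohn's lemma on the compact Hausdorff space $\bar X$ together with \propref{Higson} to the existence of a slowly oscillating $f\colon X\to[0,1]$ separating the two tails is correct, and your ``short direction'' of the second step is fine (properness is not even needed to produce $y_n'$: the triangle inequality alone gives $d(y_n',x_0)\ge d(y_n,x_0)-M-1\to\infty$). You also, correctly, prove the version that is actually true and actually used later in the proof of \propref{separ}: as printed, the biconditional is reversed, and the right statement is that the corona closures are \emph{disjoint} exactly when $d(Y\setminus B(x_0,r),Y')\to\infty$.

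The only place where the proposal is still a plan rather than a proof is the step you yourself flag: manufacturing the slowly oscillating separator from the divergence hypothesis. Your diagnosis of why $d(x,Y')/(1+d(x,Y'))$ fails is right, and the fix you sketch works. Concretely, choose a continuous nondecreasing $\phi\colon[0,\infty)\to[1,\infty)$ with $\phi(t)\to\infty$, $\phi(t)\le d\bigl(Y\setminus B(x_0,t),\,Y'\bigr)$ for large $t$, and $\phi$ slowly varying in the sense that $\bigl(\phi(t+s)-\phi(t)\bigr)/\phi(t)\to 0$ for each fixed $s$. Then
\[
f(x)=\min\Bigl(1,\ \frac{d(x,Y')}{\phi(d(x,x_0))}\Bigr)
\]
vanishes on $Y'$, equals $1$ on the tail of $Y$, and a direct estimate using that $d(\cdot,Y')$ and $d(\cdot,x_0)$ are $1$-Lipschitz (together with $\phi(d(x,x_0))\to\infty$) gives $\mathrm{diam}\,f(B(x,r))\to 0$ as $d(x,x_0)\to\infty$. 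Writing out that estimate would complete your argument.
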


\

\subsection{Solenoids} We start from the definition of the $p$-adic solenoid $\Sigma_p$ as the inverse limit:
 $$\Sigma_p=\lim_{\leftarrow}\{S^1 \stackrel{p}\leftarrow S^1\stackrel{p}\leftarrow S^1\leftarrow\cdots\}$$ 
of unit circles $S^1\subset\mathbb C$ where the bonding maps are group homomorphisms defined by  the map $z\mapsto z^p$. Thus, $\Sigma_p$ is an abelian topological group. We use $+$ for the group operation in $\Sigma_p$.
 Let $p_0:\Sigma_p\to S^1$ be the projection onto the first circle in the inverse sequence. Note that the subgroup $p_0^{-1}(1)=A_p\subset\Sigma_p$ is the group of $p$-adic integers.
We use the notation $A_p$ for $p$-adic integers instead of more common notation from algebra $\mathbb Z_p$, since the later is used in topology (and already was used in this paper) for the $\mathbb Z$ $mod\ p$ group.

Let $u:\mathbb R\to S^1\subset\mathbb C$ be the universal cover given by the exponential map $u(t)=e^{2\pi it}$. Thus, $u$ is a group homomorphism
with $u^{-1}(1)=\mathbb Z$.
Then the  lift $\tilde u:\mathbb R\to\Sigma_p$ that takes $0$ to the unit of $\Sigma_p$ is an injective group homomorphism. Thus, the group of reals is a subgroup
of the solenoid, $\mathbb R\subset\Sigma_p$. We note that $\mathbb R\cap A_p=\mathbb Z$. Since $\mathbb Z$ is dense in $A_p$, it follows that $\mathbb R$ is dense in $\Sigma_p$. Also
we note that all path components of $\Sigma_p$ are the $\mathbb R$-cosets $a+\mathbb R$, $a\in\Sigma_p$. They are the bijective images of the reals $g:\mathbb R\to\Sigma_p$ that are obtained as different  lifts of the universal cover $u$ with respect to $p_0$. Another way to define the $p$-adic solenoid is taking the qoutient group $\Sigma_p=(\mathbb R\times A_p)/\mathbb Z$ for the natural diagonal embedding $\mathbb Z\to\mathbb R\times\mathbb A_p$.

The adelic solenoid is the quotient $\Sigma_\mathbb Q=\mathbb A_{\mathbb Q}/\mathbb Q$ of the adels of the rationals~\cite{BV}. It can be viewed as the quotient group
$\Sigma_\mathbb Q=(\mathbb R\times \hat{\mathbb Z})/\mathbb Z$ where $\hat{\mathbb Z}$ is the profinite completion of the integers and $\mathbb Z\to\mathbb R\times\hat{\mathbb Z}$
is the diagonal map for natural inclusions $\mathbb Z\subset\mathbb R$ and $\mathbb Z\subset\hat{\mathbb Z}$. We note that the image of the inclusion $\mathbb Z\subset\hat{\mathbb Z}$ is dense as well as the image of the inclusion $\mathbb R\subset\Sigma_\mathbb Q$. The adelic solenoid admits the following description as the inerse limit
$$\Sigma_\mathbb Q=\lim_{\leftarrow}\{S^1 \stackrel{z^2}\leftarrow S^1\stackrel{z^3}\leftarrow S^1\stackrel{z^4}\leftarrow\cdots\}$$
where the bonding homomorphism from the $n$th circle to the previous is taking to the power $n$, $z\mapsto z^n$. Again we denote by $p_0:\Sigma_\mathbb Q\to S^1$ the projection to the first circle. All the above facts about $\Sigma_p$ hold true for $\Sigma_\mathbb Q$ with replacement of $p$-adic integers by the profinite completion of $\mathbb Z$.
In this paper we will be using the notations $\Sigma_\infty$ for $\Sigma_\mathbb Q$ and $A_\infty$ for $\hat{\mathbb Z}$.

\

\begin{prop}\label{kees}
For the Higson compactification $\bar X$ of a simply connected geodesic metric space $X$ there is an isomorphism $$\check H^1(\bar X)=\bigoplus_{\mathcal A}\mathbb Q.$$
\end{prop}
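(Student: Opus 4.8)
The plan is to identify $\check H^1(\bar X)$ (with $\mathbb Z$ coefficients) with a quotient of vector spaces of slowly oscillating functions, and then observe that such a quotient is automatically rational. Since $\bar X$ is compact Hausdorff, hence paracompact, I would start from the exponential sheaf sequence $0\to\mathbb Z\to\mathcal C_{\mathbb R}\xrightarrow{u}\mathcal C_{S^1}\to 0$, where $\mathcal C_{\mathbb R}$ and $\mathcal C_{S^1}$ are the sheaves of germs of continuous $\mathbb R$- and $S^1$-valued functions and $u(t)=e^{2\pi it}$. Using the softness of $\mathcal C_{\mathbb R}$ (so its higher cohomology vanishes) and the agreement of sheaf and \v Cech cohomology with constant coefficients on paracompacta, the long exact sequence collapses to a natural isomorphism $\check H^1(\bar X;\mathbb Z)\cong C(\bar X,S^1)/u_*\,C(\bar X,\mathbb R)$. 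By Proposition~\ref{Higson}, restriction to the dense subset $X$ identifies $C(\bar X,\mathbb R)$ with the Higson ring $C_h$ of bounded slowly oscillating functions and $C(\bar X,S^1)$ with the group $SO(X,S^1)$ of slowly oscillating continuous maps $X\to S^1$ (the circle being compact metric). Thus $\check H^1(\bar X)\cong SO(X,S^1)/u_*(C_h)$.

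Next I would remove the circle by lifting to the universal cover $u\colon\mathbb R\to S^1$. A geodesic space is path connected and locally path connected (small balls are star-shaped along geodesics), so simple connectivity of $X$ lets every continuous $g\colon X\to S^1$ lift to a continuous $\tilde g\colon X\to\mathbb R$, unique up to an additive integer constant. The decisive point is that the lift inherits slow oscillation: once $d(x,x_0)$ is large enough that $g(B(x,r))$ has diameter less than, say, $\tfrac13$, this connected set lies in an arc shorter than half of $S^1$ on which $u$ has a continuous section, whence $\operatorname{diam}\tilde g(B(x,r))=\operatorname{diam} g(B(x,r))\to 0$. Consequently $u_*$ fits into a short exact sequence $0\to\mathbb Z\to SO(X,\mathbb R)\xrightarrow{u_*}SO(X,S^1)\to 0$, where $SO(X,\mathbb R)$ denotes all (not necessarily bounded) slowly oscillating continuous functions $X\to\mathbb R$ and the kernel $\mathbb Z$ is the constant integer functions (a continuous integer-valued function on the connected space $X$ is constant). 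A class $g=u_*(\tilde g)$ lies in $u_*(C_h)$ exactly when its lift can be chosen bounded, i.e. when $\tilde g\in C_h$. Combining this with $\mathbb Z\subset C_h$ yields the clean description
$$\check H^1(\bar X)\cong SO(X,\mathbb R)/C_h,$$
slowly oscillating real functions modulo the bounded ones.

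Finally, the answer is forced to be a rational vector space by its shape alone. The space $SO(X,\mathbb R)$ is closed under addition and multiplication by real scalars, hence is an $\mathbb R$-vector space, and $C_h$ is a linear subspace (dividing a bounded, respectively slowly oscillating, function by $n$ preserves each property). The quotient $SO(X,\mathbb R)/C_h$ is therefore an $\mathbb R$-vector space, in particular a $\mathbb Q$-vector space, and choosing a $\mathbb Q$-basis indexed by a set $\mathcal A$ gives $\check H^1(\bar X)\cong\bigoplus_{\mathcal A}\mathbb Q$. I expect the genuine work to lie in the second paragraph: the existence of a global lift uses simple connectivity, and checking that slow oscillation descends to the lift is the one truly metric step, resting on the comparison of diameters in $S^1$ and in $\mathbb R$ over balls where the image has already become small. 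The reduction in the first paragraph and the vector-space conclusion in the last are then essentially formal.
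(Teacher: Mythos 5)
Your proof is correct, and it takes a genuinely different route from the paper's. The paper argues element by element: given $\alpha=[\phi]\in\check H^1(\bar X)$, it lifts $\phi|_X$ across $p_0:\Sigma_p\to S^1$ into the $p$-adic solenoid, shows the lift is slowly oscillating via a function $\epsilon(d)$ bounding the diameter of a component of $p_0^{-1}(B(s,d))$ together with connectedness of balls in a geodesic space (exactly your arc argument, transplanted from the cover $u:\mathbb R\to S^1$ to the solenoid), extends the lift to $\bar f:\bar X\to\Sigma_p$ by Proposition~\ref{Higson}, and concludes $\alpha=\bar f^*(1)$ is $p$-divisible because $\check H^1(\Sigma_p)=\mathbb Z[\frac{1}{p}]$; the final passage from ``divisible for all $p$'' to ``$\mathbb Q$-vector space'' silently uses torsion-freeness of $\check H^1$ of a compact space. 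You instead compute the whole group at once: the exponential sheaf sequence identifies $\check H^1(\bar X)$ with $C(\bar X,S^1)/u_*C(\bar X,\mathbb R)$, the Higson characterization converts both terms to slowly oscillating data on $X$, and your one genuinely metric step (the lift across $u$ inherits slow oscillation) gives the clean description $\check H^1(\bar X)\cong SO(X,\mathbb R)/C_h$ --- visibly a quotient of $\mathbb R$-vector spaces, hence rational with no divisibility bookkeeping and no tacit appeal to torsion-freeness. Your route buys an explicit functional description of the group and treats all primes simultaneously; the paper's route buys the solenoid-valued extensions $\bar f:\bar X\to\Sigma_p$ themselves, which are not incidental --- they are reused verbatim in Proposition~\ref{homotop} and in the construction of the embedding in the main theorem, which explains why the paper proves divisibility this way. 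One cosmetic correction: balls in a geodesic space need not be star-shaped, since geodesics are not unique; what is true, and all you actually use, is that every point of $B(c,r)$ is joined to $c$ by a geodesic lying in the ball, which yields connectedness of balls and local path connectedness, so your lifting step and diameter comparison stand as written.
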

\begin{proof} The idea of the  proof is taken from~\cite{K}.
It suffices to show that each element $\alpha\in\check H^1(\bar X)$ is $p$-divisible for all $p$. Let $\phi:\bar X\to S^1$ be a map representing $\alpha$, $\alpha=\phi^*(1)$, where
$1\in\mathbb Z=H^1(S^1)$. Since $X$ is simply connected, there is a lift $f:X\to\Sigma_p$ with respect to $p_0$.
We fix a metric on $\Sigma_p$ and define $\epsilon(d)$ to be the maximal diameter of a component of $p_0^{-1}(B(s,d))$ for $s\in S^1$ and the standard metric on $S^1$.
Clearly, $\lim_{d\to 0} \epsilon(d)=0$. Since the balls $B(x,r)$
in a geodesic metric space are connected, we obtain  $$diam f(B(x,r))\le\epsilon(diam(\phi(B(x,r))).$$ Then the 
slow oscillation of $\phi$  implies that $f $ is slow oscillating. Hence there is a continuous extension $\bar f:\bar X\to\Sigma_p$ of $f$. Then $\phi=p_0\bar f$ and, hence, $\alpha=\bar f^*(1)$ where
$1\in\mathbb Z[\frac{1}{p}]=\check H^1(\Sigma_p)$. Thus, $\alpha$ is $p$-divisible.
\end{proof}
\subsection{Lipschitz extension}  The following fact is well-known. It can be proven by use of Helly's theorem and the transfinite induction~\cite{CGM},\cite{Dr}.
\begin{prop}\label{Helly}
For any metric space $X$ and any $\lambda$-Lipschitz map $f:A\to Y$, $A\subset X$, where $Y= \mathbb R$  or $Y=[a,b]$ there is a $\lambda$-Lipschitz extension $\bar f:X\to Y$.
\end{prop}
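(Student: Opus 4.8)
The plan is to bypass the transfinite induction entirely and to write down an explicit extension by means of the classical McShane--Whitney formula. For the case $Y=\mathbb R$ I would set
$$\bar f(x)=\inf\{\,f(z)+\lambda d(x,z)\ :\ z\in A\,\},\qquad x\in X.$$
First I would check that this infimum is finite: fixing any point $z_0\in A$, the $\lambda$-Lipschitz bound $f(z)\ge f(z_0)-\lambda d(z_0,z)$ together with the triangle inequality $d(x,z)\ge d(z_0,z)-d(z_0,x)$ gives $f(z)+\lambda d(x,z)\ge f(z_0)-\lambda d(z_0,x)$ for every $z\in A$. Hence the infimum is bounded below and $\bar f$ is well defined.

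Next I would verify the two defining properties. That $\bar f$ restricts to $f$ on $A$ is immediate: for $x\in A$ the choice $z=x$ shows $\bar f(x)\le f(x)$, while $f(z)+\lambda d(x,z)\ge f(x)$ for all $z$ (again by the Lipschitz bound on $f$) shows $\bar f(x)\ge f(x)$. To see that $\bar f$ is $\lambda$-Lipschitz on all of $X$, for $x,y\in X$ I would use $d(x,z)\le d(y,z)+d(x,y)$ inside the infimum to obtain $\bar f(x)\le \bar f(y)+\lambda d(x,y)$, and then swap the roles of $x$ and $y$; this yields $|\bar f(x)-\bar f(y)|\le\lambda d(x,y)$.

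For the bounded target $Y=[a,b]$ I would reduce to the previous case. Extend $f$ to a $\lambda$-Lipschitz map $\bar f:X\to\mathbb R$ as above, and compose with the nearest-point retraction $r:\mathbb R\to[a,b]$, $r(t)=\max\{a,\min\{b,t\}\}$, which is $1$-Lipschitz. Then $r\circ\bar f:X\to[a,b]$ is $\lambda$-Lipschitz, and since $f$ already takes values in $[a,b]$ the retraction fixes $f$ pointwise, so $r\circ\bar f$ still restricts to $f$ on $A$.

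I do not expect a genuine obstacle. The only delicate point is the lower bound guaranteeing that the infimum is finite, and this uses nothing beyond the triangle inequality and the Lipschitz hypothesis; no properness, connectedness, or completeness of $X$ is required. The conceptual payoff is that the closed-form McShane expression produces the extension in a single step, so that the appeal to Helly's theorem and the transfinite induction mentioned in the statement becomes unnecessary for real- and interval-valued targets.
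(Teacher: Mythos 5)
Your proof is correct, and it is genuinely different from what the paper points to: the paper gives no argument of its own, instead citing a proof by Helly's theorem together with transfinite induction (one extends $f$ point by point; at each new point $x$ the admissible values form the family of intervals $[f(z)-\lambda d(x,z),\, f(z)+\lambda d(x,z)]$, $z\in A$, which pairwise intersect by the Lipschitz condition and the triangle inequality, hence have a common point by Helly's theorem in dimension one, and transfinite induction carries the extension over all of $X$). Your McShane--Whitney formula $\bar f(x)=\inf_{z\in A}\{f(z)+\lambda d(x,z)\}$ collapses that whole scheme into a single closed expression, and your verifications (finiteness via the lower bound through a fixed $z_0$, agreement with $f$ on $A$, the $\lambda$-Lipschitz estimate by swapping $x$ and $y$, and truncation by the $1$-Lipschitz retraction $r(t)=\max\{a,\min\{b,t\}\}$ for the target $[a,b]$) are all sound; the only pedantic point is that the infimum requires $A\neq\emptyset$, which is implicit in the statement. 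The trade-off is that your argument is more elementary and constructive but specific to $\mathbb R$-valued (or interval-valued) targets, exploiting the order structure of the reals, whereas the Helly-plus-transfinite-induction scheme isolates exactly the property of the target that makes extension possible (the binary intersection property of balls) and therefore generalizes verbatim to products of intervals with the sup metric and, more broadly, to hyperconvex targets---generality the present paper does not need, since it only applies the proposition with $Y=\mathbb R$ and $Y=[0,z_n]$.
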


\section {Main Theorem}

\begin{thm}\label{main}
Every simply connected proper geodesic metric space $X$ admits an embedding of its Higson compactification into the product of adelic solenoids
$$F:\bar X\to\prod_{\mathcal A}\Sigma_\infty$$ that induces an isomorphism of 1-dimensional  \v{C}ech  cohomology.
\end{thm}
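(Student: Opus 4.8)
The plan is to build $F$ coordinate-by-coordinate, indexing the coordinates by a $\Q$-basis of $\check H^1(\bar X)$ so as to force the cohomology isomorphism, and then to repair injectivity by bounded, cohomologically trivial corrections inside the group $\Sigma_\infty$. First I would compute the target cohomology. Since $\Sigma_\infty=\varprojlim(S^1\xleftarrow{z^2}S^1\xleftarrow{z^3}\cdots)$, continuity of \v{C}ech cohomology gives $\check H^1(\Sigma_\infty)=\varinjlim(\Z\xrightarrow{2}\Z\xrightarrow{3}\Z\xrightarrow{4}\cdots)=\Q$, generated by $1=p_0^*(1_{S^1})$. Writing $\prod_{\mathcal A}\Sigma_\infty=\varprojlim_{S}\prod_{a\in S}\Sigma_\infty$ over finite $S\subset\mathcal A$ (an inverse limit of compacta), continuity together with the K\"unneth formula for the finite subproducts yields $\check H^1(\prod_{\mathcal A}\Sigma_\infty)=\bigoplus_{\mathcal A}\Q$, with basis the classes $\pi_a^*(1)$ for the projections $\pi_a$. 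Hence $F^*$ is an isomorphism if and only if the pullbacks $(\pi_a\circ F)^*(1)$ form a basis of $\check H^1(\bar X)=\bigoplus_{\mathcal A}\Q$; in particular every coordinate of $F$ must be cohomologically essential, which already pins the index set to $|\mathcal A|=\dim_\Q\check H^1(\bar X)$.

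Second, I would realize a basis. By \propref{kees} fix a $\Q$-basis $\{\alpha_a\}_{a\in\mathcal A}$ of $\check H^1(\bar X)$, and as in that proof represent each $\alpha_a$ by a map $\phi_a:\bar X\to S^1$. Using simple connectivity of $X$ to lift $\phi_a|_X$ through $p_0$ to $f_a:X\to\Sigma_\infty$, and the geodesic hypothesis together with the estimate $\operatorname{diam}f_a(B(x,r))\le\epsilon(\operatorname{diam}\phi_a(B(x,r)))$ of \propref{kees}, I get that $f_a$ is slowly oscillating, hence extends (\propref{Higson}) to $\bar f_a:\bar X\to\Sigma_\infty$ with $p_0\circ\bar f_a=\phi_a$ and therefore $\bar f_a^*(1)=\alpha_a$. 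The product $F_0=(\bar f_a)_{a\in\mathcal A}$ then induces the desired isomorphism on $\check H^1$.

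Third, and this is the heart of the matter, I would upgrade $F_0$ to an injection without disturbing the isomorphism. The structural difficulty is that the maps separating points of $\bar X$ are the bounded slowly oscillating functions $C_h$ defining the compactification, yet for any $g\in C_h$ the composite with the dense inclusion $\iota:\R\subset\Sigma_\infty$ extends to a map whose class vanishes (it factors through the contractible $[-M,M]\subset\R$); such maps therefore cannot be appended as new coordinates without creating kernel in $F^*$. Instead I would correct each coordinate inside the group: put $\bar f_a'=\bar f_a+\iota\circ\bar g_a$ for suitable $g_a\in C_h$. Because $p_0$ is a homomorphism, the generator $1$ is primitive, so $(\bar f_a')^*(1)=\bar f_a^*(1)+(\iota\circ\bar g_a)^*(1)=\alpha_a$ and the isomorphism survives, while the corrections $g_a$ (constructed with prescribed values by the Lipschitz extension \propref{Helly}, and tested on corona points through \propref{Higson-2}) are chosen so that $F'=(\bar f_a')$ separates all of $\bar X$; injectivity of the compact-to-Hausdorff map $F'$ then makes it an embedding.

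The \emph{main obstacle} is precisely this reconciliation. Injectivity requires at least $w(\bar X)=|C_h|$ genuinely separating coordinates, whereas the cohomology isomorphism permits exactly $\dim_\Q\check H^1(\bar X)$ coordinates and forces every one to be essential; compatibility therefore rests on the cardinality coincidence $\dim_\Q\check H^1(\bar X)\ge|C_h|$ and on distributing the corrections $g_a$ across coordinates so as to avoid, for every pair $\xi\neq\eta$, the cancellation $\bar f_a(\xi)-\bar f_a(\eta)=\iota(\bar g_a(\eta)-\bar g_a(\xi))$. The scaling freedom $g_a\mapsto s\,g_a$, together with the abundance of separating functions in $C_h$, are the tools I would exploit to defeat this cancellation.
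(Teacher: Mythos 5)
Your steps 1 and 2 (computing $\check H^1(\prod_{\mathcal A}\Sigma_\infty)=\bigoplus_{\mathcal A}\Q$ via continuity, and realizing a basis of $\check H^1(\bar X)$ by lifting circle-valued representatives through $p_0$ as in Proposition~\ref{kees}) do match the paper. But the heart of your plan rests on an unproved assertion and on a correction scheme that fails as stated. You explicitly hang everything on the ``cardinality coincidence'' $\dim_\Q\check H^1(\bar X)\ge|C_h|=\mathfrak{c}$, which you assume rather than establish; Proposition~\ref{kees} gives $\check H^1(\bar X)=\bigoplus_{\mathcal A}\Q$ but says nothing about the size of $\mathcal A$. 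Producing continuum many independent classes is precisely the crux, and the paper resolves it by a mechanism entirely different from correcting basis maps: for each pair $(U,U')$ with disjoint closures from a basis of $\nu X$ of cardinality $\mathfrak{c}$, it builds a separating function from scratch (Proposition~\ref{separ}) that is \emph{constant} equal to $0$ on one closure and equal to $z$ on the other, where the prescribed values $z=z_\alpha$ at one fixed corona point $c$ run through a linearly independent family of cardinality continuum in $A_\infty/\Z$ (Proposition~\ref{independent}). Evaluating at $c$, and using the additivity of $\Psi$ (Proposition~\ref{reduction}) together with homotopy lifting, one sees that $\sum n_i[p_0\bar f_{\alpha_i}]=0$ forces $\sum n_iz_{\alpha_i}\in\Z$, hence all $n_i=0$: the separating coordinates are \emph{already} cohomologically independent, the family is then completed to a basis, and $\dim_\Q\check H^1(\bar X)=\mathfrak{c}$ comes out as a conclusion, not a hypothesis. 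There is no tension to reconcile.

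Your bounded-correction device $\bar f_a'=\bar f_a+\iota\circ\bar g_a$ has a concrete failure mode even granting the cardinalities. Since $\nu X$ can have cardinality $2^{\mathfrak{c}}$ while you have only $\mathfrak{c}$ coordinates, you cannot assign a coordinate to each pair of points; injectivity must come from separating the disjoint closures of $\mathfrak{c}$ many basis pairs. For set separation the scaling trick breaks down: with, say, $\bar g_a=0$ on $\bar U$ and $\bar g_a=1$ on $\bar U'$, avoiding a collision of $\bar f_a+s\,\iota\bar g_a$ on the two sets requires $\iota(s)\notin D=\{\bar f_a(\xi)-\bar f_a(\eta)\mid\xi\in Cl(\bar U),\ \eta\in Cl(\bar U')\}$; since $\iota(\R)$ is dense in $\Sigma_\infty$ and the compact set $D$ may equal all of $\Sigma_\infty$, every choice of $s$ can be cancelled somewhere on the sets (nonconstant $g_a$ only makes this worse). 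Finally, you never address injectivity on $X$ itself, which the separating family does not provide: the paper handles it by a separate Baire-category argument, approximating countably many coordinates by an embedding $g:X\to\R^\omega$ in the limitation topology with error $\rho(t)\to 0$ at infinity, so each $g_i$ stays slowly oscillating, $\bar g_i=\bar f_i$ on $\nu X$, and the cohomology isomorphism is untouched. So both halves of your ``heart of the matter'' --- the independence of enough classes and the repair of injectivity --- need the paper's constructions, not bounded corrections.
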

We denote as $[X,Y]_0$ pointed homotopy classes of maps $f:(X,x_0)\to (Y,y_0)$.
\begin{prop}\label{homotop}
Let $x_0\in X$  be a base point of a simply connected metric space $X$ such that its inclusion $x_0\to\bar X$ is a cofibration and let $0\in\Sigma_p$, $p\in\mathbb N\cup\{\infty\}$, and $1\in S^1$  be the corresponding base points. Then 
$$[\bar X,\Sigma_p]_0=[\bar X,S^1]_0=\check H^1(\bar X).$$
\end{prop}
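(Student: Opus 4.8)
The plan is to prove the two stated identifications in turn, using the circle as the hinge and the projection $p_0\colon\Sigma_p\to S^1$ to compare the solenoid with it. Throughout, both $[\bar X,\Sigma_p]_0$ and $[\bar X,S^1]_0$ are abelian groups under the pointwise operations inherited from the topological groups $\Sigma_p$ and $S^1$, and $(p_0)_*$ is a homomorphism between them.

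First I would identify $[\bar X,S^1]_0$ with $\check H^1(\bar X)$. As $\bar X$ is compact Hausdorff, hence paracompact, and $S^1$ is a $K(\mathbb Z,1)$, the classical Bruschlinsky--Eilenberg theorem gives $[\bar X,S^1]\cong\check H^1(\bar X;\mathbb Z)$. To descend to pointed classes I would use that $S^1$ is a connected topological group, hence a grouplike $H$-space: combined with the hypothesis that $x_0\hookrightarrow\bar X$ is a cofibration, the change-of-base-point action of $\pi_1(S^1)$ on $[\bar X,S^1]_0$ is trivial, so the forgetful map $[\bar X,S^1]_0\to[\bar X,S^1]$ is a bijection. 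This yields $[\bar X,S^1]_0=\check H^1(\bar X)$.

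Next I would show that $(p_0)_*\colon[\bar X,\Sigma_p]_0\to[\bar X,S^1]_0$ is an isomorphism. Surjectivity is exactly the construction in \propref{kees}: given a pointed $\phi\colon\bar X\to S^1$, restrict to $X$, lift through the universal cover $u\colon\mathbb R\to S^1$ (possible since $X$ is simply connected and locally path-connected, as geodesic balls are star-shaped along geodesics), compose with $\tilde u\colon\mathbb R\to\Sigma_p$ to obtain a slowly oscillating lift $f\colon X\to\Sigma_p$ of $\phi|_X$, and extend it to $\bar f\colon\bar X\to\Sigma_p$ by \propref{Higson}, normalizing base points so that $\bar f$ is pointed and $p_0\bar f=\phi$. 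For injectivity, suppose $g\colon\bar X\to\Sigma_p$ is pointed with $p_0g\simeq\ast$. Then $p_0g$ lifts through $u$ to $h\colon\bar X\to\mathbb R$ with $h(x_0)=0$ (a map to $S^1$ is nullhomotopic iff it lifts to the contractible cover, by the homotopy lifting property of $u$), and $g'=\tilde u\circ h$ satisfies $p_0g'=p_0g$. The difference $g-g'$ then maps $\bar X$ into the fiber $p_0^{-1}(1)=A_p$; since $\bar X$ is connected (being the closure of the connected space $X$) and $A_p$ is totally disconnected, $g-g'$ is constant and, being pointed, equals $0$. Hence $g=\tilde u\circ h$, which is nullhomotopic because $\mathbb R$ is contractible and $h(x_0)=0$. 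Thus $\ker(p_0)_*=0$, and combining the two steps gives $[\bar X,\Sigma_p]_0=[\bar X,S^1]_0=\check H^1(\bar X)$.

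The main obstacle is the injectivity of $(p_0)_*$: because $p_0$ is not a covering map, the usual uniqueness of lifts is unavailable, and its role is played here by the contrast between the connectedness of $\bar X$ and the total disconnectedness of the solenoid fiber $A_p$. A secondary point demanding care is keeping every lift, extension, and homotopy base-point preserving, which is where the cofibration hypothesis and the $H$-space structure of $S^1$ are used.
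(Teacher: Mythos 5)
Your proposal is correct and follows essentially the same route as the paper: surjectivity of $(p_0)_*$ via the slowly oscillating lift constructed in Proposition~\ref{kees}, uniqueness/injectivity via the connectedness of $\bar X$ played against the totally disconnected fiber $p_0^{-1}(1)=A_p$, and the cofibration hypothesis (with the Bruschlinsky identification $[\bar X,S^1]=\check H^1(\bar X)$) to pass between pointed and free classes. The only cosmetic difference is that you package the comparison as a group homomorphism with trivial kernel, lifting the nullhomotopy through the covering $u:\mathbb R\to S^1$, whereas the paper constructs the explicit inverse $\Phi$ and verifies $\Psi\Phi=\Phi\Psi=1$, proving well-definedness by lifting pointed homotopies through $p_0$ itself; the underlying mechanism is identical.
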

\begin{proof}
The map $p_0:Z\to S^1$ defines the map $\Psi:[\bar X,\Sigma_p]_0\to[\bar X,S^1]_0$ as $\Psi([f])=[fp_0]$ for the pointed homotopy class $[f]$ of a map $f:(\bar X,x_0)\to(\Sigma_p,0)$. We define the inverse map $\Phi$ as follows.
Given a map $\phi:(\bar X,x_0)\to (S^1,1)$ we consider
its lift $\bar f:(\bar X,x_0)\to( \Sigma_p,0)$ with respect to $p_0$ constructed in the proof of Proposition~\ref{kees}.  If $\phi$ has a pointed homotopy to a map $\phi'$, then the lifting of that homotopy will be a pointed homotopy, since the fiber $p_0^{-1}(1)$ is disconnected. This proves  $\Phi$ is well-defined. Clearly, $\Psi\Phi=1$.
The uniqueness of lift of connected space $X$ with the prescribed value $0$ for the base point implies that $f$ is the only lift of $fp_0$. Thus, $\Phi\Psi=1$.

Since $x_0\to X$ is a cofibration, we obtain $[\bar X,S^1]_0=[\bar X,S^1]=\check H^1(\bar X)$.
\end{proof}
We can choose a metrics on the solenoids $\Sigma_p$, $p\in\mathbb N\cup\{\infty\}$, in such a way that  $p_0:\Sigma_p\to S^1$ is 1-Lipschitz and  the inclusion map $\mathbb R\to\Sigma_p$  is Lipschitz.

For a slowly oscillating map $f:X\to\Sigma_p$ we denote by $\bar f:\bar X\to\Sigma_p$ its extension to the Higson corona. Thus, $\overline{g|_X}=g$ for any 
continuous map $g:\bar X\to\Sigma_p$. We note that for connected space $X$ with $x_0\in X$ any continuous map $f:(X,x_0)\to(\Sigma_p,0)$ lands in the subgroup $\mathbb R\subset\Sigma_p$.
\begin{prop}\label{reduction}
Let $X$ be  a simply connected proper metric space with a base point. 
For any elements $[f],[g]\in[\bar X,\Sigma_p]_0$,  where $\bar X$ is the Higson compactification, and for any  $n\in\mathbb N$ we have 

$n\Psi([f])=\Psi([\overline{nf|_X}])$ and 

$\Psi([f])+\Psi([g])=\Psi([\overline{f|_X+g|_X}])$.
\end{prop}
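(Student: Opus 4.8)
The plan is to prove both identities by pushing everything through the group homomorphism $p_0\colon\Sigma_p\to S^1$, once the "restrict-then-extend" symbols $\overline{nf|_X}$ and $\overline{f|_X+g|_X}$ have been identified with genuine pointwise combinations on all of $\bar X$. First I would record the preliminary point that, because $X$ is connected and the maps are pointed, the restrictions $f|_X$ and $g|_X$ take values in the subgroup $\mathbb R\subset\Sigma_p$, as observed just before the statement. Consequently $nf|_X$ and $f|_X+g|_X$ are an ordinary real scalar multiple and an ordinary real sum of slowly oscillating functions, hence again slowly oscillating, so the Higson extensions $\overline{nf|_X}$ and $\overline{f|_X+g|_X}$ exist by Proposition~\ref{Higson}. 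I would also note that $nf$ and $f+g$ are pointed, since $nf(x_0)=n\cdot 0=0$ and $(f+g)(x_0)=0+0=0$, and that the pointwise operations respect pointed homotopy because the group operations of $\Sigma_p$ are continuous, so they descend to the classes $[f],[g]$.

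The key reduction is the identification $\overline{nf|_X}=nf$ and $\overline{f|_X+g|_X}=f+g$ as maps $\bar X\to\Sigma_p$, where $nf$ and $f+g$ denote the pointwise combinations computed in the topological group $\Sigma_p$. These are continuous on $\bar X$ because addition and integer multiplication in $\Sigma_p$ are continuous, and they restrict on $X$ to $nf|_X$ and $f|_X+g|_X$ respectively. Since $X$ is dense in $\bar X$ and $\Sigma_p$ is Hausdorff, a continuous map on $\bar X$ is determined by its values on $X$; equivalently, the recorded property $\overline{g|_X}=g$ for continuous $g$ gives $\overline{(nf)|_X}=nf$ and $\overline{(f+g)|_X}=f+g$ immediately. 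This is the step I expect to carry the real weight: it is what converts the externally defined symbols $\overline{nf|_X}$ and $\overline{f|_X+g|_X}$ into objects to which $p_0$ may be applied directly, and it quietly invokes both the topological-group structure of $\Sigma_p$ and the density of $X$ in $\bar X$.

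Granting this identification, the two identities drop out of the fact that $p_0$ is a group homomorphism. Indeed $p_0\circ(nf)=(p_0\circ f)^{n}$ and $p_0\circ(f+g)=(p_0\circ f)\cdot(p_0\circ g)$, where the right-hand operations are the $n$-th power and the pointwise product in the topological group $S^1$. Finally I would invoke the identification $[\bar X,S^1]_0=\check H^1(\bar X)$ from Proposition~\ref{homotop} together with the standard fact that, for maps into $S^1=K(\mathbb Z,1)$, pointwise multiplication of representatives corresponds to addition of $1$-dimensional cohomology classes and the $n$-th power corresponds to multiplication by $n$. Putting these together yields $\Psi([\overline{nf|_X}])=\Psi([nf])=[p_0\circ(nf)]=[(p_0\circ f)^{n}]=n\,\Psi([f])$, and likewise $\Psi([\overline{f|_X+g|_X}])=[p_0\circ(f+g)]=[p_0\circ f]+[p_0\circ g]=\Psi([f])+\Psi([g])$, which are the two asserted equalities. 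Together with the bijectivity of $\Psi$ established in Proposition~\ref{homotop}, these identities exhibit $\Psi$ as transporting the pointwise group structure on $[\bar X,\Sigma_p]_0$ to the additive structure of $\check H^1(\bar X)$.
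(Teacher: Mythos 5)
Your proof is correct and takes essentially the same approach as the paper: both arguments rest on the density of $X$ in $\bar X$ (so continuous extensions to the Higson compactification are unique), on the fact that $f|_X$ lands in $\mathbb R\subset\Sigma_p$, on the homomorphism property of $p_0$, and on the standard correspondence between pointwise multiplication of maps into $S^1$ and addition in $\check H^1$. The only cosmetic difference is that you identify $\overline{nf|_X}=nf$ and $\overline{f|_X+g|_X}=f+g$ upstairs in $\Sigma_p$ using the continuity of its group operations before applying $p_0$, whereas the paper carries out the equivalent comparison downstairs in $S^1$ via $p_0|_{\mathbb R}=\exp$, as in $[\overline{(p_0f|_X)^n}]=[\overline{\exp(nf|_X)}]=[p_0\overline{nf|_X}]$.
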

\begin{proof}
We recall that the restriction of $p_0:\Sigma_p\to S^1$ to $\mathbb R$ equals the exponential map $\exp(t)=e^{2\pi it}$. Then
$$n\Psi([f])=n[p_0f]=[(p_0f)^n]=[\overline{(p_0f|_X)^n}]=[\overline{(exp (f|_X))^n}]=$$
$$[\overline{exp(nf|_X)}]=[\overline{p_0(nf|_X)}]=[p_0\overline{nf|_X}]=\Psi([\overline{nf|_X}])$$
and $$\Psi([f])+\Psi([g])=[p_0f]+[p_0g]=[\overline{p_0f|_X}]+[\overline{p_0f|_X}]=[\overline{p_0f|_X}\cdot\overline{p_0g|_X}]=$$
$$[\overline{exp(f|_X)\cdot exp(g|_X)}]=
[\overline{exp(f|_X+g|_X)}]=
[\overline{p_0(f|_X+g|_X)}]=
\Psi([\overline{f|_X+g|_X}]).$$ Here the multiplication $\cdot$ is taken in the unit circle $S^1$.
\end{proof}

For subsets $A$ and $B$ of a metric space $(X,d)$ we denote  by $$dist(A,B)=\inf\{d(a,b)\mid a\in A,\ b\in B\}$$
the distance from $A$ to $B$.
\begin{prop}\label{separ}
Let $X$ be a proper metric space with a base point $x_0\in X$, let $c\in\nu X$ be a fixed point, and let $A,B\subset\nu X$ be two closed disjoint subsets. For any $z\in A_p\setminus\{0\}$,
$p\in\mathbb N\cup\{\infty\}$, there is a slow oscillating function
$f_z:X\to\mathbb R\subset\Sigma_p$ with the continuous extension $\bar f_z:\bar X\to\Sigma_p$ such that $f_z(x_0)=0$, $\bar f_z(c)=z$, and $\bar f_z(A)\cap \bar f_z(B)=\emptyset$. 
\end{prop}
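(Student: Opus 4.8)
The plan is to split the required map into a bounded \emph{separating} part and an unbounded \emph{value} part, writing $f_z=\phi+g$ and exploiting that the inclusion $\mathbb R\hookrightarrow\Sigma_p$ is a group homomorphism, so that $\overline{\phi+g}=\bar\phi+\bar g$ in $\Sigma_p$ and, applying the homomorphism $p_0$, $p_0\bar f_z=p_0\bar\phi\cdot p_0\bar g$ in $S^1$. Thus it suffices to arrange the two pieces so that $\phi$ separates $A,B$ in the circle direction with $\bar\phi(c)=0$, while $g$ produces the prescribed value $z$ at $c$ and stays asymptotically integral (i.e. $p_0\bar g\equiv1$) on $A\cup B$, so that the circle-separation survives the sum.

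For $\phi$ I would first observe that a continuous real function on $\bar X$ is precisely the extension of a bounded slowly oscillating function on $X$, so the restriction $C(\bar X)\to C(\nu X)$ is onto (Tietze, as $\nu X$ is closed in the compact Hausdorff space $\bar X$). Since $\nu X$ is normal and $A,B$ are disjoint closed sets, I choose $u:\nu X\to\mathbb R$ carrying $A,B$ into two values whose associated circle arcs are disjoint and with $u(c)=0$ (putting $0$ into the arc of whichever set contains $c$, if any), pull $u$ back to a bounded slowly oscillating $\phi$, and correct $\phi$ in a bounded neighbourhood of $x_0$ to get $\phi(x_0)=0$. This yields $\bar\phi(c)=0$ and $p_0\bar\phi(A)$, $p_0\bar\phi(B)$ inside disjoint arcs of $S^1$.

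For $g$ I would encode $z$ by its partial sums: pick integers $a_0=0,a_1,a_2,\dots$ with $a_k\equiv z\ (\mathrm{mod}\ p^k)$ (with $p^k$ replaced by $k!$ when $p=\infty$), so $a_k\to z$ in $A_p\subset\Sigma_p$ and $a_k-a_{k-1}\in p^{k-1}\mathbb Z$. Since $\mathbb Z$ sits in both $\mathbb R$ and $A_p$, the real number $a_k$ \emph{is} the $p$-adic integer $a_k$, so $a_k\to z$ in $\Sigma_p$. Using properness I fix radii $R_k\uparrow\infty$ and build $g$ equal to $a_k$ on a coarse neighbourhood of the end $c$ beyond radius $R_k$, equal to $0$ on a coarse neighbourhood of $A\cup B$, and interpolating across the intervening annuli; choosing $R_k-R_{k-1}\ge k\,|a_k-a_{k-1}|$ makes all interpolation slopes at most $1/k$, so $g$ is slowly oscillating, hence slowly oscillating into the compact group $\Sigma_p$ (the inclusion $\mathbb R\hookrightarrow\Sigma_p$ being Lipschitz), and extends by \propref{Higson}. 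The remaining part of $X$ is filled in by the Lipschitz extension \propref{Helly} without increasing the oscillation. Granting the construction, $\bar g(c)=\lim_k a_k=z$ and $\bar g(A\cup B)\subset A_p$, whence $p_0\bar f_z(A),p_0\bar f_z(B)$ stay in the disjoint arcs, $\bar f_z(A)\cap\bar f_z(B)=\emptyset$, $\bar f_z(c)=\bar\phi(c)+\bar g(c)=z$, and $f_z(x_0)=0$.

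The hard part is precisely this construction of $g$. An unbounded slowly oscillating real function must \emph{wind} around the solenoid, so $\bar g$ is non-constant and generically non-integral on $\nu X$; the delicate requirements are (i) that the unavoidable winding be confined to corona points lying outside $A\cup B\cup\{c\}$, so that $p_0\bar g\equiv1$ on $A\cup B$, and (ii) that the single point $c$ be approached \emph{only} through the integer plateaus $a_k$, so that $\bar g(c)$ is the honest $p$-adic limit $z$ and not some spurious subnet limit produced in a transition annulus. This is where properness together with \propref{Higson-2} enters: when $c\notin A\cup B$ it lets me realize a coarse neighbourhood of the end $c$ that is eventually distance-separated from a coarse neighbourhood of $A\cup B$, so the plateaus lie along the former, the constant $0$ along the latter, and every transition region has corona closure missing $A\cup B\cup\{c\}$. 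Verifying that each net converging to $c$ eventually enters the plateau region — and the more careful routing of transitions needed when $c\in A\cup B$ — is the main obstacle I expect to have to manage.
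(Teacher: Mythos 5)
This proposal is a plan rather than a proof: by your own admission the construction of $g$ --- which is the entire content of Proposition~\ref{separ} --- is deferred (``the main obstacle I expect to have to manage''), and the two requirements you impose on $g$ are respectively unattainable and misdiagnosed. Requirement (i), that $p_0\bar g\equiv 1$ on all of $A\cup B$, i.e. $\bar g(A\cup B)\subset A_p$, is strictly stronger than anything the paper proves, and in the essential case it is impossible, not merely ``delicate routing.'' The paper reduces to the case $c\in A$ (it assumes $c\notin B$ and enlarges $A$ by adding $c$), and the case that matters for the main theorem has $z\in A_p\setminus\mathbb Z$. Then $\bar g(c)=z\notin\iota(\mathbb R)$ forces $g$ to be unbounded along any closed $A'\subset X$ with $A=Cl_{\bar X}(A')\cap\nu X$; if $A'$ contains an unbounded connected piece accumulating at $c$ (a geodesic ray, say), the intermediate value theorem forces $g$ to take on $A'$ itself values $m+\theta$ with fixed fractional part $\theta\in(0,1)$ and $m$ ranging over unboundedly many integers, at points of radius tending to infinity; by properness these points have corona limit points, which lie in $A$ and carry $p_0\bar g=e^{2\pi i\theta}\ne 1$. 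So the separation cannot be made to ``survive the sum'' by keeping $\bar g$ integral on $A$. The paper never attempts this: its single function is pinned to $0$ on all of $B'$ and to the plateau values $z_n$ on the part of $A'$ in the odd dyadic annuli, and disjointness is extracted from $\bar f_z(B)=\{0\}$ together with the value control on $A$, not from integrality of anything along $A$.

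Requirement (ii) is a red herring, and seeing why is exactly the idea you are missing. Once $g$ is slowly oscillating, Proposition~\ref{Higson} gives a continuous $\bar g$, so $\bar g(c)$ is a single well-defined value and \emph{every} net converging to $c$ gives the same limit; there are no ``spurious subnet limits.'' What you need is not that all approaches to $c$ pass through plateaus, but that \emph{one} set $P$ with $c\in Cl_{\bar X}(P)$ carries values converging to $z$ --- and in your radius-banded design this can genuinely fail, since the corona point $c$ may lie entirely over your transition bands, in which case no amount of net-chasing rescues $\bar g(c)=z$. The paper solves this by a pigeonhole dichotomy: writing $A'=C\cup C'$ with $C$, $C'$ the intersections of $A'$ with the odd and even dyadic annuli $A_n$, the point $c$ lies in $Cl_{\bar X}(C)$ or $Cl_{\bar X}(C')$, without loss of generality the former; it then sets $f_z=z_n$ on $C\cap A_{2n-1}$ and $f_z=0$ on $B'$, interpolating annulus by annulus through Proposition~\ref{Helly} with Lipschitz constants $\lambda_n=\max\{1/\sqrt{r_n},1/2^n\}\to 0$, the plateau heights being tied to the separation radii of Proposition~\ref{Higson-2} by $z_n\le\min\{\sqrt{r_n},2^{n-1}\}$. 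Your $\phi+g$ splitting, the Tietze construction of $\phi$, and the approximants $a_k\equiv z\ (\mathrm{mod}\ p^k)$ are all fine and parallel the paper's $z_n$; but without the even/odd dichotomy, and with (i) demanded in a form that fails when $c\in A$, the proposal does not close.
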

\begin{proof}
Suppose that $c\notin B$. Then we enlarge $A$ by adding $c$ to it. We will define slowly oscillating function $f_z:X\to\mathbb R\subset\Sigma_p$ such that $\bar f_z$ takes $B$ to $0$
and takes $A$ to $z$. 

Let $A',B'\subset X$ be closed disjoint subsets with $A=Cl_{\bar X}(A')\cap\nu X$ and
$B=Cl_{\bar X}(B')\cap\nu X$. Let $$r_n=dist(A'\setminus B(x_0,2^n), B'\setminus B(x_0,2^n)).$$ In view of Proposition~\ref{Higson-2} we may assume that $r_n$ is increasig with $r_n\to\infty$.

We consider the partition of $X\setminus B(x_0,1)$ into closed annuli $A_n$ centered at $x_0$ with outer radius $2^{n+1}$ and with the inner radius $2^n$.
Consider $$C=A'\cap\bigcup_{n=1}^{\infty}A_{2n-1}\ \ \text{and}\ \ C'=A'\cap\bigcup_{n=1}^{\infty}A_{2n}.$$ Since $A'=C\cup C'$ and $c\in Cl_{\bar X}(A')$, 
either $c\in Cl_{\bar X}(C)$ or $c\in Cl_{\bar X}(C')$.
Without loss of generality we may assume the first.

 Let $z_n\in\mathbb R$ be a sequence converging to $z$ in $\Sigma_p$. We may assume that $z_n\to\infty$ in $\mathbb R$ and
 $z_n\le\min\{ \sqrt{r_{n}},2^{n-1}\}$.
We define $f_z(C_n)=z_n$ and $f_z(B')=0$. Let $$\lambda_n=\max\{\frac{1}{\sqrt{r_n}},\frac{1}{2^n}\}.$$ Note that the restriction of $f_z$ to $(C\cup B')\cap A_{2n-1}$ is $\lambda_n$-Lipschitz. 
Indeed, for $x\in B'$ and $y\in C_n$ we have $d(x,y)\ge r_n$ and 
$$|f_z(y)-f_z(x)|=z_n\le\sqrt{r_n}\le\frac{1}{\sqrt{r_n}}d(x,y).$$
By Proposition~\ref{Helly} the above restriction of $f_z$ can be extended to a $\lambda_n$-Lipschitz map $$f_z^{2n-1}:A_{2n-1}\to [0,z_n].$$ We show that the union map $$f^{2n-1}_z\cup f^{2n+1}_z:A_{2n-1}\cup A_{2n+1}\to[0,z_{n+1}]$$ is 
$\lambda_n$-Lipschitz. For if, $x\in A_{2n+1}$ and $y\in A_{2n-1}$ we have $d(x,y)\ge 2^{2n}$ and, hence,
$$|f^{2n+1}_z(x)-f^{2n-1}_z(y)|\le z_{n+1}\le 2^n=\frac{1}{2^n}2^{2n}
\le \frac{1}{2^n}d(x,y)\le\lambda_nd(x,y).$$

 By Proposition~\ref{Helly} the map $f^{2n-1}_z\cup f^{2n+1}_z:A_{2n-1}\cup A_{2n+1}\to \mathbb R$ can be extended to $\lambda_n$-Lipschitz map  $f^{2n}_z:A_{2n}\to\mathbb R$. We extend  $f_z^0$ continuously to $B(x_0,1)$ with $f_z(x_0)=0$.
The union $\cup_k f_z^k$ defines a slow oscillating function $f_z:X\to\mathbb R$ with the required properties. Indeed, since the inclusion of the subgroup  $\mathbb R\to\Sigma_p$ is a Lipschitz function, the map $f_z:X\to\Sigma_p$ is slowly oscillating. Then by Proposition~\ref{Higson} there is a continuous extension $\bar f_z:\bar X\to\Sigma_p$. Note that $\bar f_z(B)=0$. Since the sequence $z_k\subset\mathbb Z=\mathbb R\cap A_p\subset\Sigma_p$ converges to $z$, we obtain $\bar f_z(c)=\bar f_z(A)=z\ne 0$.
\end{proof}

A family of elements $\{x_\alpha\}\subset A$ of an abelian group is called linearly independent if for any finite subfamily $\{x_{\alpha_i}\}$
satisfying the equality $\sum_in_ix_{\alpha_i}=0$ with $n_i\in\mathbb Z$
it follows that $n_i=0$ for all $i$.
\begin{prop}\label{independent}
The group $A_p/\mathbb Z$, $p\in\mathbb N\cup\{\infty\}$, contains a family of linearly independent elements $\{u_\alpha\}$ of cardinality continuum.
\end{prop}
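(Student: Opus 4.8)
The plan is to reduce the statement to a rank computation. The key elementary observation is that a family $\{u_\alpha\}$ in an abelian group $G$ is linearly independent in the sense defined just above precisely when its image under the canonical map $G\to G\otimes_{\mathbb Z}\mathbb Q$, $x\mapsto x\otimes 1$, is linearly independent over $\mathbb Q$: if $\sum_i n_iu_{\alpha_i}=0$ in $G$ then $\sum_i n_i(u_{\alpha_i}\otimes 1)=0$ in $G\otimes\mathbb Q$, so $\mathbb Q$-independence of the images forces every $n_i=0$. (This direction is immediate and does not see any torsion of $G$.) Hence, writing $\mathfrak c$ for the cardinality of the continuum, it suffices to exhibit $\mathfrak c$ many elements of $G=A_p/\mathbb Z$ whose images in $G\otimes\mathbb Q$ are $\mathbb Q$-independent. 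Since the image of $A_p/\mathbb Z$ spans $(A_p/\mathbb Z)\otimes\mathbb Q$ over $\mathbb Q$, a spanning set contains a basis, so it is enough to prove $\dim_{\mathbb Q}\bigl((A_p/\mathbb Z)\otimes\mathbb Q\bigr)=\mathfrak c$ and then select a basis out of the elements $x\otimes 1$ and lift it.

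To compute this dimension I would tensor the short exact sequence $0\to\mathbb Z\to A_p\to A_p/\mathbb Z\to 0$ with $\mathbb Q$. As $\mathbb Q$ is flat over $\mathbb Z$, this gives an exact sequence of $\mathbb Q$-vector spaces $0\to\mathbb Q\to A_p\otimes\mathbb Q\to (A_p/\mathbb Z)\otimes\mathbb Q\to 0$, so that $(A_p/\mathbb Z)\otimes\mathbb Q\cong (A_p\otimes\mathbb Q)/\mathbb Q$ and the two dimensions differ by one. It therefore remains to show $\dim_{\mathbb Q}(A_p\otimes\mathbb Q)=\mathfrak c$.

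For $p$ prime, $A_p\otimes\mathbb Q$ is the field $\mathbb Q_p$ of $p$-adic numbers; for $p=\infty$ it is $\hat{\mathbb Z}\otimes\mathbb Q$ with $\hat{\mathbb Z}=\prod_q A_q$. In either case $A_p$ is torsion-free of cardinality $\mathfrak c$ (for $p=\infty$ one has $|\hat{\mathbb Z}|=\prod_q|A_q|=\mathfrak c^{\aleph_0}=\mathfrak c$), hence $A_p$ embeds into $A_p\otimes\mathbb Q$ and $|A_p\otimes\mathbb Q|\ge\mathfrak c$; conversely $A_p\otimes\mathbb Q$ consists of finite $\mathbb Q$-linear combinations of elements of $A_p$, so $|A_p\otimes\mathbb Q|\le\mathfrak c$. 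A $\mathbb Q$-vector space of dimension at most $\aleph_0$ is countable (as $\mathbb Q$ is countable), while an infinite-dimensional one has cardinality equal to its dimension; since $|A_p\otimes\mathbb Q|=\mathfrak c$ is uncountable, this forces $\dim_{\mathbb Q}(A_p\otimes\mathbb Q)=\mathfrak c$. Removing one dimension leaves $\dim_{\mathbb Q}((A_p/\mathbb Z)\otimes\mathbb Q)=\mathfrak c$, and extracting a basis among the images $x\otimes 1$ and lifting produces the required family $\{u_\alpha\}$ of cardinality $\mathfrak c$.

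I expect no deep obstacle here: the whole argument is a cardinality/rank computation. The only points that need care are the clean equivalence between the combinatorial notion of independence and $\mathbb Q$-linear independence in $G\otimes\mathbb Q$ (which is what makes the possible torsion of $A_p/\mathbb Z$ irrelevant), and the uniform treatment of the case $p=\infty$, where one must record that $\hat{\mathbb Z}=\prod_q A_q$ is still torsion-free of cardinality $\mathfrak c$. As an alternative that avoids the tensor language, one can argue directly: choose a $\mathbb Q$-basis of $A_p\otimes\mathbb Q$ containing $1$, scale its remaining members by suitable integers so that they lie in $A_p$, and observe that any nontrivial integral relation among the resulting elements of $A_p/\mathbb Z$ would yield a nontrivial $\mathbb Q$-relation involving $1$, a contradiction.
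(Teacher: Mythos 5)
Your proposal is correct, and while it proves the same rank statement at heart, its main line is organized genuinely differently from the paper's. The paper works concretely inside $\mathbb Q_p$ for finite $p$: since $\mathbb Q_p$ is divisible, it is a $\mathbb Q$-vector space, so it splits as $\mathbb Q_p\cong(\mathbb Q_p/\mathbb Q)\oplus\mathbb Q$; one chooses a basis $\{v_\alpha\}$ of the complement of $\mathbb Q$, scales $u_\alpha=p^{k(\alpha)}v_\alpha$ into $A_p$, and observes that a relation $\sum_i n_iu_{\alpha_i}\in\mathbb Z\subset\mathbb Q$ contradicts the direct-sum decomposition unless all $n_i=0$; the case $p=\infty$ is then settled separately by lifting the finite-$p$ family along the projection $\hat{\mathbb Z}\cong\prod_q A_q\to A_p$, which carries $\mathbb Z\subset\hat{\mathbb Z}$ into $\mathbb Z\subset A_p$. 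You instead tensor $0\to\mathbb Z\to A_p\to A_p/\mathbb Z\to 0$ with the flat module $\mathbb Q$ and reduce everything to $\dim_{\mathbb Q}(A_p\otimes\mathbb Q)=\mathfrak c$, obtained by the cardinality count (an uncountable $\mathbb Q$-vector space has dimension equal to its cardinality). This buys two things: a uniform treatment of $p\in\mathbb N$ and $p=\infty$ (you need only that $\hat{\mathbb Z}$ is torsion-free of cardinality $\mathfrak c$, not the product decomposition and projection trick), and an explicit justification of a step the paper leaves implicit, namely that the chosen basis really has cardinality continuum. What the paper's version buys is concreteness --- explicit representatives $u_\alpha=p^{k(\alpha)}v_\alpha$ with no tensor formalism --- and your one-directional observation (that $\mathbb Q$-independence of the images $u\otimes 1$ forces $\mathbb Z$-independence, so the torsion of $A_p/\mathbb Z$ is irrelevant) is exactly the right hinge making the reduction clean; in fact the equivalence holds in both directions, though you correctly use only the easy one. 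Your closing ``alternative'' --- a $\mathbb Q$-basis of $A_p\otimes\mathbb Q$ containing $1$, with the remaining members scaled by integers into $A_p$ --- is essentially the paper's argument in disguise, with ``basis containing $1$'' playing the role of the splitting $\mathbb Q_p\cong(\mathbb Q_p/\mathbb Q)\oplus\mathbb Q$ and with the $p=\infty$ case absorbed rather than handled by projection.
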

\begin{proof}
First we show it for finite $p$. The group  of $p$-adic rational numbers $\mathbb Q_p$ contains $A_p$ and $\mathbb Q$ as  subgroups with $A_p\cap\mathbb Q=\mathbb Z$. 
The group $\mathbb Q_p$ is divisible and, hence, is a vector space over $\mathbb Q$. Thus, $\mathbb Q_p=(\mathbb Q_p/\mathbb Q)\oplus\mathbb Q$. 
Let $\{v_\alpha\}\subset\mathbb Q_p$ be a basis for the summand $\mathbb Q_p/\mathbb Q$. Then for each $v_\alpha$ there is $k=k(\alpha)$ such that $p^kv_\alpha\in A_p$.
We define $u_\alpha=p^{k(\alpha)}v_\alpha$. Suppose that $\sum n_iu_i\in\mathbb Z$. Then $\sum n_ip^{k(i)}v_i\in\mathbb Z\subset\mathbb Q$. Therefore,
$n_ip^{k(i)}=0$ for all $i$. Hence $n_i=0$ for all $i$. Thus the family $\{u_\alpha\}$ is linearly independent over $\mathbb Z$.

We note that $A_\infty\cong\prod_pA_p$ where the product is taken over all prime numbers $p$. We take a family $\{z_\alpha\}\subset A_\infty$ in such  way that $\pi_p(z_{\alpha})=u_\alpha$
where $\pi_p:\prod_pA_p\to A_p$ is the projection onto the factor.
Then $\{z_\alpha\}$ forms a linearly independent family in $A_\infty/\mathbb Z$.
\end{proof}

\

{\bf Proof of Theorem~\ref{main}.} We fix points $x_0\in X$ and $c\in\nu X$. Since $X$ is a reasonably nice space, we may assume that the inclusion $x_0\to X$ is a cofibration.
Otherwise we attach to $X$ an interval $[x_0,x_1]$ along some $x_1\in X$.

Let 
$\{z_\alpha\in A_\infty\mid\alpha\in {\mathcal A}'\}$ be the family form Proposition~\ref{independent} of the cardinality of continuum $\mathfrak{c}$. 

It is not difficult to show that the weight of the Higson corona $\nu X$ is  $\mathfrak{c}$. Let $\mathcal U$ be a basis of topology of $\nu X$ of cardinality
$\mathfrak{c}$. Define $$\mathcal P=\{(U,U')\in\mathcal U\times\mathcal U\mid \bar U\cap\bar U'=\emptyset\}$$ and 
let $\gamma:\mathcal P\to \mathcal A'$ be a bijection. For every pair $(U,U')\in\mathcal P$ we apply Proposition~\ref{separ} with  $A=\bar U$, $B=\bar U'$ and $z=z_{\gamma(U,U')}$
to obtain a function $f_z:X\to\mathbb R$ that extends to a map $\bar f_z:(\bar X,x_0)\to(\Sigma_p,0)$  with $f_z(x_0)=0$, $\bar f_z(c)=z$, and $\bar f_z(A)\cap \bar f_z(B)=\emptyset$.  

We denote by $f_\alpha= f_{z_\alpha}$, $\alpha\in\mathcal A'$.
We claim that the cohomology classes $\{[p_0\bar f_{\alpha}]\}_{\alpha\in\mathcal A'}$ in $\check H^1(\bar X)$  are linearly independent. Suppose that $\sum n_i[p_0\bar f_{\alpha_i}]=0$. Then by Proposition~\ref{reduction}, 
$$\sum n_i[p_0\bar f_{\alpha_i}]=\sum n_i\Psi[\bar f_{\alpha_i}]=\sum \Psi[\overline{n_if_{\alpha_i}}]=\Psi([\overline{\sum n_if_{\alpha_i}}])=0.$$
Hence there is a rel $x_0$ homotopy of $p_0\overline{\sum n_if_{\alpha_i}}$ to the constant map to $1\in S^1$.  The lift of this homotopy is a rel $x_0$ homotopy in $\Sigma_\infty$
of $\overline{\sum n_if_{\alpha_i}}$ to the constant map to $0$. Therefore,
$\sum n_i\bar f_{\alpha_i}(c)=\sum n_iz_{\alpha_i}$ lies in the path component of $0$. Thus, $\sum n_iz_{\alpha_i}\in\mathbb Z$.
The linear independence of $\{z_\alpha+\mathbb Z\}\in A_\infty/\mathbb Z$ implies that all $n_i=0$.

We complete the cohomology classes $\{[p_0\bar f_{\alpha}]\}_{\alpha\in\mathcal A'}$ in $\check H^1(\bar X)=\oplus\mathbb Q$  to a basis $\{[\phi_\alpha]\}_{\alpha\in\mathcal A''}$
where $\phi_\alpha:(\bar X,x_0)\to (S^1, 1)$. Let $\bar f_\alpha:(\bar X,x_0)\to(\Sigma_\infty,0)$ be the lift of $\phi_\alpha$ defined by the map $\Phi$ of Proposition~\ref{homotop}.
Let $\mathcal A=\mathcal A'\cup\mathcal A''$.
Since the map $$F=(\bar f_{\alpha})_{\alpha\in\mathcal A}:\bar X\to\prod_{\mathcal A}\Sigma_\infty$$ restricted to the Higson corona $\nu X$ separates 
the disjoint closures of basis sets, it separates distinct points and, hence,  it is an embedding on $\nu X$.
We note that the restriction $F'=F|_X:X\to \prod_{\mathcal A}\mathbb R\subset \prod_{\mathcal A}\Sigma_\infty$ to $X$ lands in $\prod_{\mathcal A}\mathbb R$. 

We approximate $F$ by embedding as follows. Fix $\omega$ coordinates  $\alpha_1,\dots,\alpha_n,\dots\in\mathcal A$ and consider $\|\ \|_\infty$ metric on $\mathbb R^\omega$. We denote by $f_i=f_{\alpha_i}$.
Let $$d_i(t)=\sup_{x\in X\setminus B(x_0,t)}diam(f_i(B(x,1))).$$ Since $f_i$ is extendible to the Higson corona, it is slowly oscillating. Hence $\lim_{t\to\infty}d_i(t)=0$.
We recall that the limitation topology on a functional space $C(X,Y)$ with metrizable $Y$ is defined by the collection of open balls $B_\rho(f,1)$ with respect to all compatible metrics $\rho$ on $Y$ where the ball in $C(X,Y)$ is defined by means of the sup metric~\cite{Bo}.
We note that the functional space $C(X,\mathbb R^\omega)$ taken with the limitation topology is a Baire space~\cite{T},\cite{Bo}. This allows to prove that the
map $f=(f_i)_{i=1}^N:X\to\mathbb R^\omega$ can be approximated by embedding $g:X\to\mathbb R^\omega$ in such a way that 
$$\rho(t)=\sup_{x\in X\setminus B(x_0,t)}\|f(x)-g(x)\|_\infty\to 0$$ as $t\to\infty$.
Then the $i$th coordinate of $g$, the map $g_i:X\to\Sigma_p$, is slowly oscillating for each $i$.  Indeed, by the triangle inequality $$\lim_{t\to\infty}\sup_{x\in X\setminus B(x_0,t)}diam(g_i(B(x,1)))\le\lim_{t\to\infty} (d_i(t)+2\rho(t))=0.$$ 
Note that the extension $\bar g_i:\bar X\to\Sigma_\infty$ of $g_i$ coincides with $\bar f_i$ on $\nu X$. 

We replace the map $F$ with a new map $$F'=(\bar g_i)_{i=1}^\infty\times (\bar f_\alpha)_{\alpha\in \mathcal A\setminus\{\alpha_1,\dots,\alpha_n\dots\}}:\bar X\to\prod_{\mathcal A}\Sigma_\infty$$ which is an embedding.
\qed

\begin{remark}\label{rem}
All coordinate maps $\bar f_\alpha$ in the embedding $F:\bar X\to\prod_{\mathcal A}\Sigma_\infty$ have the property that the restriction $\bar f_\alpha$ to $X$ lands in 
the subgroup $\mathbb R\subset\Sigma_\infty$. Moreover, by the construction $\bar f_\alpha(X)\subset\mathbb R_+$.
\end{remark}

\section{Embedding into the product of Knaster continua}
The proof of Theorem~\ref{main} in the case of finite $p$ brings the following.
\begin{thm}\label{main-p}
For any $p$ every simply connected proper geodesic metric space $X$ admits an embedding of its Higson compactification into the product of $p$-adic solenoids
$$F:\bar X\to\prod_{\mathcal A}\Sigma_p$$ that induces a rational isomorphism of 1-cohomology,
$$
\check H^1(\prod_{\mathcal A}\Sigma_p)\otimes\mathbb Q\to \check H^1(\bar X)\otimes\Q=\check H^1(\bar X)=\bigoplus_{\mathcal A}\mathbb Q.
$$
\end{thm}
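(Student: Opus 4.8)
The plan is to run the construction in the proof of \theoref{main} essentially verbatim, with $\Sigma_\infty$ replaced everywhere by $\Sigma_p$ and with the linearly independent family $\{z_\alpha\}\subset A_\infty/\mathbb Z$ replaced by the family $\{u_\alpha\}\subset A_p/\mathbb Z$ furnished, for finite $p$, by \propref{independent}. All of the auxiliary statements invoked there --- \propref{homotop}, \propref{reduction}, and \propref{separ} --- are already formulated for every $p\in\mathbb N\cup\{\infty\}$, so they apply to finite $p$ without change. Fixing $x_0\in X$ and $c\in\nu X$, choosing a basis $\mathcal U$ of $\nu X$ of weight $\mathfrak c$, indexing by $\mathcal A'$ the pairs of basis sets with disjoint closures, and applying \propref{separ} to each pair, one obtains slowly oscillating maps whose Higson extensions $\bar f_\alpha:(\bar X,x_0)\to(\Sigma_p,0)$ separate those disjoint closures. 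Exactly as in \theoref{main}, the map $F=(\bar f_\alpha)_{\alpha\in\mathcal A}$ is then injective on $\nu X$, and the limitation-topology Baire-category approximation of $F|_X$ by an embedding --- a step that does not see whether $p$ is finite --- upgrades $F$ to an embedding of all of $\bar X$.

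The only genuine difference lies in the cohomology computation, and it is precisely the replacement of $\check H^1(\Sigma_\infty)=\mathbb Q$ by $\check H^1(\Sigma_p)=\mathbb Z[\tfrac1p]$. By continuity of \v{C}ech cohomology applied to the presentation $\Sigma_p=\lim_{\leftarrow}\{S^1\stackrel{p}\leftarrow S^1\leftarrow\cdots\}$ one gets $\check H^1(\Sigma_p)=\lim_{\rightarrow}\{\mathbb Z\stackrel{p}\to\mathbb Z\to\cdots\}=\mathbb Z[\tfrac1p]$; equivalently $\check H^1(\Sigma_p)$ is the Pontryagin dual $\widehat{\Sigma_p}=\mathbb Z[\tfrac1p]$. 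Since $\prod_{\mathcal A}\Sigma_p$ is a compact connected abelian group, I would identify $\check H^1(\prod_{\mathcal A}\Sigma_p)$ with its character group $\bigoplus_{\mathcal A}\widehat{\Sigma_p}=\bigoplus_{\mathcal A}\mathbb Z[\tfrac1p]$, the $\alpha$-th summand generated by the coordinate class $\pi_\alpha^*(1)$, where $\pi_\alpha$ is the projection onto the $\alpha$-th factor and $1$ is the generator of $\check H^1(\Sigma_p)$.

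Under $F$ the coordinate class pulls back to $F^*\pi_\alpha^*(1)=[p_0\bar f_\alpha]\in\check H^1(\bar X)$. For $\alpha\in\mathcal A'$ these classes are linearly independent by the same argument as in \theoref{main}: an integral relation among them, pushed through \propref{reduction} and lifted along $p_0$, forces $\sum_i n_i u_{\alpha_i}\in\mathbb R\cap A_p=\mathbb Z$, whereupon the linear independence of $\{u_\alpha+\mathbb Z\}$ in $A_p/\mathbb Z$ from \propref{independent} gives $n_i=0$. For $\alpha\in\mathcal A''$ one has $[p_0\bar f_\alpha]=[\phi_\alpha]$, so the whole family $\{F^*\pi_\alpha^*(1)\}_{\alpha\in\mathcal A}$ is a basis of $\check H^1(\bar X)=\bigoplus_{\mathcal A}\mathbb Q$. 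Thus $F^*\colon\bigoplus_{\mathcal A}\mathbb Z[\tfrac1p]\to\bigoplus_{\mathcal A}\mathbb Q$ carries the standard generators to a basis of the target. Because $\mathbb Z[\tfrac1p]\otimes\mathbb Q=\mathbb Q$ while $\bigoplus_{\mathcal A}\mathbb Q$ is already rational, tensoring with $\mathbb Q$ turns $F^*$ into a basis-to-basis $\mathbb Q$-linear map $\bigoplus_{\mathcal A}\mathbb Q\to\bigoplus_{\mathcal A}\mathbb Q$, i.e. the asserted rational isomorphism.

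I expect the main obstacle to be not the embedding --- which transfers mechanically --- but securing a genuine rational \emph{isomorphism} rather than a mere epimorphism, and seeing why the statement must be phrased rationally. The integral map $F^*$ is never onto for finite $p$, since $\mathbb Z[\tfrac1p]$ fails to be $q$-divisible for primes $q\neq p$, so one is forced to pass to $\mathbb Q$-coefficients. The substantive input behind this is the control of $\check H^1$ of the \emph{infinite} product: it is Pontryagin duality (the dual of a product of compact groups being the direct sum of the duals) that guarantees there are no rational $1$-classes on $\prod_{\mathcal A}\Sigma_p$ beyond those coming from the coordinate projections, which is what yields injectivity --- and hence the full isomorphism after tensoring --- in addition to the surjectivity already supplied by the basis realization.
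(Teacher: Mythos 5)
Your proposal is correct and follows essentially the same route as the paper, whose entire proof of \theoref{main-p} is the remark that the argument for \theoref{main} goes through verbatim for finite $p$ (using the finite-$p$ part of \propref{independent}, since \propref{homotop}, \propref{reduction}, and \propref{separ} are stated for all $p\in\mathbb N\cup\{\infty\}$). Your additional justification of $\check H^1\bigl(\prod_{\mathcal A}\Sigma_p\bigr)=\bigoplus_{\mathcal A}\mathbb Z[\tfrac1p]$ via continuity of \v{C}ech cohomology or Pontryagin duality, and the observation that tensoring with $\mathbb Q$ is forced because $\mathbb Z[\tfrac1p]\ne\mathbb Q$, merely make explicit what the paper leaves implicit.
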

We call a map $f:X\to Z$ {\em essential} if every map  $g:X\to Z$   homotopic  to $f$ is surjective. We call a subset $X\subset\prod Z_\alpha$ {\em essential} if its projection on each factor $p_\alpha:X\to Z_\alpha$ is essential. Since the embedding $F:\bar X\to\prod\Sigma_p$ in Theorem~\ref{main-p} is essential, we obtain
\begin{cor}
For any $p$ every simply connected proper geodesic metric space $X$ admits an essential embedding of its Higson compactification into a product of $p$-adic solenoids.
\end{cor}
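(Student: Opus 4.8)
The plan is to deduce the corollary from \theoref{main-p} by checking that the embedding $F=(\bar f_\alpha)_{\alpha\in\mathcal A}:\bar X\to\prod_{\mathcal A}\Sigma_p$ constructed there is essential. By the definition of an essential subset this amounts to showing that each coordinate projection $\bar f=\bar f_\alpha:\bar X\to\Sigma_p$ is an essential map, so I fix such a coordinate and let $g:\bar X\to\Sigma_p$ be an arbitrary map homotopic to $\bar f$; the task is to prove that $g$ is onto.

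First I would descend to the approximating circles. Write $\Sigma_p=\varprojlim\{S^1\stackrel{z^p}{\leftarrow}S^1\stackrel{z^p}{\leftarrow}\cdots\}$ with projection $p_n:\Sigma_p\to S^1$ onto the $n$-th circle, and put $g_n=p_ng$. The relation $p_n=(z\mapsto z^p)\circ p_{n+1}$ in the inverse system gives $g_n=(g_{n+1})^p$, so under the group isomorphism $[\bar X,S^1]=\check H^1(\bar X)$ of \propref{homotop} we get $[g_n]=p\,[g_{n+1}]$, and hence $[g_n]=[p_0g]/p^{\,n}$. Because $g\simeq\bar f$, the class $[p_0g]=[p_0\bar f]$ is independent of $g$ and is nonzero by the construction in \theoref{main-p}. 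Since \propref{kees} identifies $\check H^1(\bar X)$ with the $\Q$-vector space $\bigoplus_{\mathcal A}\Q$, dividing a nonzero vector by $p^{\,n}$ leaves it nonzero, so $[g_n]\neq0$ for all $n$.

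Next I would turn nonvanishing into surjectivity. Any map to $S^1$ missing a point factors through the contractible arc $S^1\setminus\{\text{pt}\}$ and is null-homotopic, so $[g_n]\neq0$ forces $g_n$ to be surjective for every $n$. Finally I would recombine the circles by compactness: for a compact space the image under a map into an inverse limit equals the inverse limit of the images of its projections, whence $g(\bar X)=\varprojlim_n g_n(\bar X)=\varprojlim_n S^1=\Sigma_p$. Thus $g$ is onto, and since $g\simeq\bar f$ was arbitrary, $\bar f$ is essential. As this holds for every coordinate, $F$ is an essential embedding and the corollary follows.

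Two points demand care. The identity $g(\bar X)=\varprojlim_n g_n(\bar X)$ rests on a short compactness argument: a compatible thread $(x_n)$ with $x_n\in g_n(\bar X)$ is hit by any cluster point in $\bar X$ of chosen preimages of the $x_n$, so it lies in $g(\bar X)$. The point I expect to be the real obstacle is verifying that \emph{every} coordinate of the final, approximated embedding $F'$ still carries a nonzero class in $\check H^1(\bar X)$: the coordinates indexed by $\mathcal A'$ are linearly independent and those indexed by $\mathcal A''$ are basis vectors, but the approximants $\bar g_i$ replacing the $f_i$ over $X$ must be treated separately. Here I would observe that $\bar g_i$ and $\bar f_i$ agree on $\nu X$ and differ over $X$ only by the error term $g_i-f_i$, which tends to $0$ in $\R$ as $d(x,x_0)\to\infty$; the straight-line homotopy inside the subgroup $\R\subset\Sigma_p$ then extends over $\bar X$ rel $\nu X$, showing $\bar g_i\simeq\bar f_i$ and hence $[p_0\bar g_i]=[p_0\bar f_i]\neq0$, so that the argument above applies verbatim.
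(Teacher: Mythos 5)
Your proof is correct, but it runs along a genuinely different line than the paper's. The paper disposes of essentialness through \propref{onto}: any \emph{surjective} map from a connected compact Hausdorff space to $\Sigma_p$ is essential, proved by projecting a hypothetical non-surjective homotopic map to a circle of the inverse system, lifting the resulting null-homotopy into the totally disconnected fiber $A_p$, and then observing that a surjective map cannot be null-homotopic because $\Sigma_p$ has more than one path component; surjectivity of each coordinate $\bar f_\alpha$ is supplied by \remref{rem} (the image of $X$ contains the ray $\R_+$, which, since $X$ is geodesic and the values $z_n$ tend to infinity, is all hit by the intermediate value theorem, and $\R_+$ is dense in $\Sigma_p$). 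You instead argue cohomologically: homotopy invariance plus $p$-divisibility down the inverse system gives $p^n[g_n]=[p_0 g]=[p_0\bar f]\neq 0$ in the $\Q$-vector space $\check H^1(\bar X)$ of \propref{kees}, so every circle projection of every homotopic map is surjective, and the compactness/inverse-limit argument recovers surjectivity into $\Sigma_p$. Each route buys something: the paper's \propref{onto} is domain-agnostic (it is reused almost verbatim for the Knaster continua in \propref{onto2}, where your argument would not transfer, since it leans on $\check H^1(\bar X)=\bigoplus\Q$), whereas your criterion treats all coordinates uniformly --- including the basis-completion coordinates indexed by $\mathcal A''$, for which the $\R_+$-image claim of \remref{rem} is least transparent, and the perturbed coordinates $\bar g_i$, where your straight-line homotopy rel $\nu X$ inside $\R\subset\Sigma_p$ (legitimate because $\sup_{x\notin B(x_0,t)}|g_i(x)-f_i(x)|\to 0$, so the homotopy extends continuously over $\bar X\times I$, constant on $\nu X$) makes explicit a point the paper leaves implicit, namely that the approximation step does not change the cohomology classes $[p_0\bar f_i]$.
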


\begin{prop}\label{onto}
For any $p\in\{\infty\}\cup \mathbb N\setminus\{1\}$ any surjective map $f:Y\to\Sigma_p$ of a connected compact  Hausdorff space is essential.
\end{prop}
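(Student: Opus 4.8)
The plan is to prove the equivalent statement that every map $g\colon Y\to\Sigma_p$ homotopic to $f$ is again surjective. So I would start with a homotopy $H\colon Y\times[0,1]\to\Sigma_p$ with $H_0=f$ and $H_1=g$, and make one elementary observation: for each fixed $y\in Y$ the assignment $t\mapsto H(y,t)$ is a path in $\Sigma_p$ joining $f(y)$ to $g(y)$, so $f(y)$ and $g(y)$ always lie in one and the same path component of $\Sigma_p$. As recalled in Subsection 2.2, the path components of $\Sigma_p$ are exactly the cosets $a+\mathbb{R}$ of the dense one--parameter subgroup $\mathbb{R}=\tilde{u}(\mathbb{R})$, so this observation says precisely that $g(y)-f(y)\in\mathbb{R}$ for every $y\in Y$.

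Next I would argue by contradiction, assuming $g$ is not surjective. Since $Y$ is connected and compact and $g$ is continuous, $K:=g(Y)$ is a subcontinuum of $\Sigma_p$, and it is \emph{proper} by assumption. This is where the hypothesis $p\neq1$ enters decisively: for $p\in\{\infty\}\cup\mathbb{N}\setminus\{1\}$ the solenoid $\Sigma_p$ is an \emph{indecomposable} continuum whose composants coincide with its path components, i.e.\ with the cosets $a+\mathbb{R}$. Since every proper subcontinuum of an indecomposable continuum is contained in a single composant, I would conclude $K=g(Y)\subseteq a+\mathbb{R}$ for some $a\in\Sigma_p$. The contradiction is then immediate: for each $y$ we have $g(y)\in a+\mathbb{R}$ and $g(y)-f(y)\in\mathbb{R}$, hence
\[
f(y)=g(y)-\bigl(g(y)-f(y)\bigr)\in(a+\mathbb{R})-\mathbb{R}=a+\mathbb{R},
\]
so $f(Y)\subseteq a+\mathbb{R}$, a proper (indeed dense) subset of $\Sigma_p$, contradicting the surjectivity of $f$. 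Thus $g$ is surjective and $f$ is essential.

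The step I expect to require the most care is the continuum--theoretic input used in the second paragraph: that $\Sigma_p$ is indecomposable and that its composants are precisely the path components $a+\mathbb{R}$, so that no proper subcontinuum can meet two distinct path components. This is classical for solenoids, but if a self--contained argument is wanted it can be extracted from the inverse--limit description $\Sigma_p=\lim_{\leftarrow}(S^1\xleftarrow{z^p}S^1\leftarrow\cdots)$ by controlling the arc components of the projections $q_n(K)\subseteq S^1$ of a proper subcontinuum $K$. It is exactly this point that breaks down for $p=1$, where $\Sigma_1=S^1$ is decomposable, its only composant is all of $S^1$, and the proposition is genuinely false (a degree--zero surjection, e.g.\ a fold map, is homotopic to a constant).

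I would also flag why the naive shortcut does \emph{not} suffice, to justify invoking indecomposability. One can upgrade the pointwise relation $g(y)-f(y)\in\mathbb{R}$ to a continuous $\delta\colon Y\to\mathbb{R}$ with $g=f+\tilde{u}\delta$ (lifting $p_0H$ through the covering $\exp\colon\mathbb{R}\to S^1$ via the homotopy lifting property), and since $Y$ is compact $\delta$ is bounded, so $g$ is a uniform approximation of $f$; but uniform limits of the maps $f+\tilde{u}(s\delta)$ need not remain surjective, so surjectivity cannot be propagated by a connectedness--in--$s$ argument alone. The indecomposable structure of $\Sigma_p$ is therefore the essential ingredient, and the argument above uses only the pointwise statement, bypassing the continuity of $\delta$ entirely.
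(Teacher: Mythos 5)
Your proof is correct, and it takes a genuinely different route from the paper's. The paper argues homotopy-theoretically: if $g\simeq f$ is not surjective, then (by the inverse-limit structure) some coordinate projection $\pi\circ g$ to a circle misses a point and is therefore null-homotopic; lifting that null-homotopy through $\pi$ (a bundle projection with totally disconnected fiber $A_p$) deforms $g$ into a fiber and hence, by connectedness of $Y$, to a constant, so $f$ itself is null-homotopic --- which is impossible for a surjection because a null-homotopy would confine $f(Y)=\Sigma_p$ to a single path component. You replace the entire lifting argument by continuum theory: $\Sigma_p$ (for bonding degrees $\ge 2$, which covers the adelic case as well since its bonding maps $z\mapsto z^n$ have $n\ge 2$) is an indecomposable continuum whose composants are exactly the arc components $a+\mathbb{R}$, so the proper subcontinuum $g(Y)$ lies in a single coset, and the trivial pointwise observation $g(y)-f(y)\in\mathbb{R}$ then drags $f(Y)$ into that same proper coset, contradicting surjectivity of $f$. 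The trade-off is clear: the paper's proof is self-contained given only the bundle structure $p_0:\Sigma_p\to S^1$ and homotopy lifting, machinery it reuses in Proposition~\ref{onto2}; yours outsources the hard content to a classical but nontrivial theorem (indecomposability of solenoids and the identification of composants with path components), in exchange using the homotopy only in the weakest possible pointwise form, with no lifting whatsoever --- your argument would apply verbatim to maps into any compact connected abelian group that is an indecomposable continuum with path components equal to composants. Your two closing remarks are also accurate and worth keeping: the statement genuinely fails for $p=1$ (in the paper's proof the final contradiction evaporates since $\Sigma_1=S^1$ is path-connected, in yours $S^1$ is decomposable), and the continuous-displacement shortcut $g=f+\tilde{u}\delta$ indeed cannot propagate surjectivity, so some input beyond the homotopy relation --- lifting in the paper, indecomposability for you --- is unavoidable.
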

\begin{proof}
Assume that $f$ is homotopic to $g$ which is not onto. Then for some projection $\pi:\Sigma_p\to S^1$ onto the circle in the inverse limit presentation of $\Sigma_p$ the composition $\pi\circ g$ is not onto. Hence it is homotopic
to a constant map. The lifting of this homotopy defines a homotopy of $g$ to the disconnected fiber $A_p$ and hence to a point. Thus, we have a homtopy of $f$ to a constant map.
Since $\Sigma_p$ has more than one path component, this is impossible for a surjecive map.
\end{proof}

We define a continuum $K_p=\Sigma_p/\sim$ to be the quotient space under the identification $x\sim -x$. A theorem of Bellamy~\cite{Be} says that $K_2$ is homeomorphic to the Knaster continuum, also known as the Bucket handle continuum. We note that the subgroup $\mathbb R\subset\Sigma_p$ is taken under this identification to a path component of $K_p$ which is an injective image of the ray $\mathbb R_+$ with the initial point $q(0)$ where $q:\Sigma_p\to K_p$ is the quotient map.

Since for complex numbers the conjugation commutes with taking the power, $\bar z^n=\overline{z^n}$, we have a commutative diagram
$$
\begin{CD}
S^1 @<z^n<< S^1\\
@VqVV @VqVV\\
I @<\bar n<< I\\
\end{CD}
$$ where $q$ is the projection onto the interval, the orbit space of the conjugation, and $\bar n$ is the induced map. Note that $\bar n:I\to I$ is the $n$ times folding the interval.
Thus, there is the inverse limit presentation of $K_p$ that fits into the diagram:
$$
\begin{CD}
S^1 @<z^{n_1}<< S^1 @<z^{n_2}<< S^1 @<z^{n_3}<<\dots @<<<\Sigma_p\\
@VqVV @VqVV @VqVV @. @VqVV\\
I @<\bar n_1<< I @<\bar n_2<< I @<\bar n_3<<\dots @<<< K_p.\\
\end{CD}
$$
We identify the first interval in the bottom inverse sequence with $[0,1]$, the second with $[0,n_1]$, the third with $[0,n_1n_2]$ and so on to view the bonding maps as 'isometric' folding.
Moreover, we view each folding map $\bar n_k:[0,n_1n_2\cdots n_k]$ as  a retraction onto $[0,n_1n_2\cdots n_{k-1}]$. Thus $$\mathbb R_+=\bigcup_k[0,n_1n_2\cdots n_k]$$
is naturally embedded into $K_p$ as the path component of $q(0)$.

\begin{prop}\label{onto2}
For any $p\in\{\infty\}\cup \mathbb N\setminus\{1\}$ any surjective map $f:Y\to K_p$ of a compact Hausdorff space is essential.
\end{prop}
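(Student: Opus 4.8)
The plan is to mirror the proof of Proposition~\ref{onto}, but with the folding maps $\bar n_k$ playing the role that the solenoid projections $\rho_k\colon\Sigma_p\to S^1$ played there. The first move is a reduction: I claim it suffices to show that \emph{every non-surjective map $g\colon Y\to K_p$ is null-homotopic}. Granting this, suppose $g\simeq f$ with $g$ not onto; then $f\simeq g\simeq\mathrm{const}$. But a surjection onto $K_p$ cannot be null-homotopic: a null-homotopy $H$ with $H_0=f$ and $H_1\equiv c$ would, for every $z\in K_p$, produce (choosing $y$ with $f(y)=z$) a path $s\mapsto H(y,s)$ from $z$ to $c$, forcing $K_p$ to be path-connected; but $K_p$ is not path-connected, since it is a finite-to-one quotient of $\Sigma_p$, whose path components (the $\mathbb R$-cosets) are uncountable in number. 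This contradiction proves the proposition. Note that this reduction uses only the surjectivity of $f$ and never the connectedness of $Y$, which is exactly why $Y$ may be taken to be an arbitrary compact Hausdorff space here, in contrast with Proposition~\ref{onto}.

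Second, I localize the non-surjectivity to a finite stage of the inverse limit. Since $g(Y)$ is a closed proper subset of $K_p=\lim_{\leftarrow}\{\,[0,n_1\cdots n_k]\,,\ \bar n_k\,\}$, an omitted point $z_0\notin g(Y)$ has a basic neighborhood $\pi_k^{-1}(V)$ disjoint from $g(Y)$, where $\pi_k$ is the projection onto the $k$-th interval and $V$ is an open subinterval. Hence at some finite level the map $g_k=\pi_k g\colon Y\to[0,n_1\cdots n_{k-1}]$ omits the whole open interval $V$.

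Third---the heart of the argument---I use the gap $V$ to homotope $g$ into the ray $\mathbb R_+=\bigcup_k[0,n_1\cdots n_k]\subset K_p$, the path component of the endpoint $q(0)$, and then contract. Concretely, I try to build a coherent family of null-homotopies $H_j\colon g_j\simeq 0$ inside the intervals $[0,n_1\cdots n_{j-1}]$, compatible under the bonding maps ($\bar n_j\circ H_j=H_{j-1}$), so that they assemble to a single homotopy $Y\times I\to K_p$ from $g$ to the constant map at $q(0)$. The construction proceeds by lifting successively through the folding maps $\bar n_j$. The key geometric fact is that each folding map is a trivial covering over the \emph{interior} of its target: its only fold (critical) points sit over the two endpoints of the interval. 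Thus a homotopy can be lifted through $\bar n_j$ as soon as it stays clear of those endpoints, and the omitted interval $V$ at level $k$ supplies precisely the room to do this---one first slides $g_k$ off a fold through the gap $V$, pushing the image into a single lap of the folding, and then unfolds the higher coordinates lap by lap without ever crossing a crease.

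The step I expect to be the main obstacle is exactly this coherent lifting through the folding maps. Unlike the solenoid projections used in Proposition~\ref{onto}, which are genuine coverings with totally disconnected fibers and therefore lift homotopies freely, the maps $\bar n_j$ are only branched coverings, and a careless unfolding will run a lift into a fold point, where sheets collide and uniqueness of the lift (uniform in $y\in Y$) breaks down. The crux is to organize the lap-by-lap unfolding so that, simultaneously for all $y\in Y$, the lifted homotopy avoids the fold points at every finite stage and the resulting maps remain compatible under the bonding maps; the missed interval $V$ is what makes this possible and is the device that here replaces the disconnected-fiber argument available for $\Sigma_p$.
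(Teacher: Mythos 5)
Your proposal has a genuine gap, and it is exactly the step you flag yourself: the coherent lifting of a null-homotopy through the folding maps $\bar n_j$ is never carried out, and nothing in the proposal resolves it. The difficulty is not cosmetic. The gap $V$ omitted at level $k$ pulls back, at level $j>k$, to one omitted subinterval \emph{per lap}, while the fold points of $\bar n_j$ sit at the lap boundaries; so each component of the complement of the gaps at level $j$ still contains a crease, and the restriction of $\bar n_j$ to such a component is again two-to-one near the fold. Sliding the image ``off a fold through the gap'' at one level does not produce data compatible with the bonding maps at the next level, and you give no mechanism for making the unfoldings simultaneous in $y\in Y$ and coherent in $j$. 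Your underlying claim---that every non-surjective map $g\colon Y\to K_p$ is null-homotopic---is in fact \emph{false} at the generality you insist on: if $Y$ is a two-point space mapping to distinct path components of $K_p$, then $g$ is not surjective and not null-homotopic (a homotopy moves each point within its path component). So your parenthetical boast that the argument ``never uses the connectedness of $Y$'' is precisely where it breaks; for connected $Y$ the claim is plausible, but only via the nontrivial classification of proper subcontinua of Knaster-type continua as arcs, which you neither prove nor cite. (Note that the paper's own proof also uses connectedness of $Y$, in showing the pullback is connected, even though the statement omits the hypothesis.)

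For comparison, the paper avoids the level-by-level fold analysis entirely. It observes that the quotient $q\colon\Sigma_p\to K_p$ is a two-to-one covering away from the single branch point $q(0)$: it forms the pullback $Z$ of $f$ along $q$, shows $Z$ is connected, uses compactness of $Y$ to bound the diameters of the homotopy tracks inside the ray $\mathbb R_+\subset K_p$ (whose ``waves'' have amplitude $1$), and then modifies the homotopy to be stationary on $f^{-1}([0,2b])$ so that it never passes through $q(0)$. The modified homotopy lifts through the fibration $q|\colon\Sigma_p\setminus\{0\}\to K_p\setminus\{0\}$, producing a homotopy of $f'\colon Z\to\Sigma_p$ to a non-surjective map, contradicting Proposition~\ref{onto}. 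The key structural point you missed is that all the branching of $q$ is concentrated at one point of $K_p$, which can be dodged by a single modification of the homotopy, whereas your folding maps branch at every finite stage. If you want to salvage your outline for connected $Y$, the honest route is to quote the arc classification of proper subcontinua; otherwise, adopt the pullback-and-lift scheme.
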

\begin{proof}
Let $q:\Sigma_p\to K_p$ be the quotient map. Note that $q$ is open and all preimages $q^{-1}(y)$ are 2-point sets except $q^{-1}(q(0))=0$.
We show that the compact space $Z$ in the pull-back in the diagram
$$
\begin{CD}
Z @>f'>> \Sigma_p\\
@V q'VV @VqVV\\
Y @>f>> K_p\\
\end{CD}
$$
is connected. Suppose that $Z=U_1\cup U_2$ is a disjoint union of nonempty open sets. For $i=1,2$ we define subsets  $A_i=\{y\in Y\mid (q')^{-1}(y)\subset U_i\}$. 
Since $U_i$ are closed and open and $q'$ isan open map the sets $A_1=Y\setminus q'(U_2)$ and $A_2=Y\setminus q'(U_1)$ are closed and open.  Since $q'$ has single point fibers at least one of $A_i$ is not empty, let it be $A_1$. Since $Y$ is connected, we obtain $Y=A_1$. Then $U_2=\emptyset$ which contradicts to the assumption.

Assume that $f$ is homotopic by means of  homotopy $H:Y\times I\to K_p$ to a non-surjective map $g$. Note that for $y\in Y$ with $f(y)\in\mathbb R_+$ we have 
$H(y\times I)\subset\mathbb R_+$.  We may assume that the inner metric on $\mathbb R_+$  induced by a metric on $K_p$ is the standard metric on $\mathbb R_+$.
Thus $\mathbb R_+$ makes infinitely many waves  in $K_p$ of amplitude 1.
Then the compactness of $Y$ and the continuity of $H$ imply that there is an upper bound $b$
on the diameter of a path $H(y,-):I\to\mathbb R_+$ in $\mathbb R_+$. Let $Y_0=f^{-1}([0,2b])$ and let $a\in K_p\setminus g(Y)$. We consider a 
new homotopy $H_0:Y\times I\to K_p$  defined by the  region under the graph of a function $\phi:Y\to [0,1]$
with $\phi(Y_0)=0$ and $\phi(f^{-1}(a))=1$. Thus, the homotopy $H_0$ is stationary on $Y_0$ and does not move other points through $0\in\mathbb R_+\subset K_p$.
The homotopy $H_0$ defines a homotopy $H_0':Z\times I\to K_p$. Let $Z_0=(q')^{-1}(Y_0)$.
By the homotopy lifting property of a fibration $q|: \Sigma_p\setminus\{0\}\to K_p\setminus\{0\}$ there is a lift $\bar H_0:(Z\setminus Z_0) \times I\to \Sigma_p$ of $H'_0|_{(Y\setminus Y_0)\times I}$. Since $q$ is open, the extension of $\bar H_0$ to $\bar H_1:Z\times I\to \Sigma_p$ by the stationary homotopy of $f'|_{Z_0}$ is a homotopy of
$f':Z\to \Sigma_p$ to a map with the image in a proper subset. This contradicts to Proposition~\ref{onto}.
\end{proof}

\begin{thm}\label{imbed2}
For any $p$ and any simply connected finite dimensional proper geodesic metric space $X$ its Higson compactification can be essentially embedded into the product of  continua $K_p$.
\end{thm}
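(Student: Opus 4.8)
The plan is to take the essential embedding $F=(\bar f_\alpha)_{\alpha\in\mathcal A}\colon\bar X\to\prod_{\mathcal A}\Sigma_p$ produced by \theoref{main-p} and push it forward through the folding quotient $q\colon\Sigma_p\to K_p$, obtaining $G=(q\bar f_\alpha)_{\alpha\in\mathcal A}\colon\bar X\to\prod_{\mathcal A}K_p$. Since $\bar X$ is compact and $\prod_{\mathcal A}K_p$ is Hausdorff, it will suffice to check that $G$ is injective and that each coordinate $q\bar f_\alpha$ is essential; a continuous injection of a compact space is automatically an embedding, and essentiality of the coordinates is exactly essentiality of the embedding.

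Essentiality is the easy half. Each separating coordinate $f_\alpha=f_{z_\alpha}$ maps the connected space $X$ into $\mathbb R_+\subset\Sigma_p$ and assumes the values $z_n\to\infty$, so $\bar f_\alpha(X)$ is a connected unbounded subset of $\mathbb R_+$ containing $0$, hence equals the whole forward ray $\mathbb R_+$. As $\mathbb R_+$ is dense in the compact group $\Sigma_p$, we get $\bar f_\alpha(\bar X)=\overline{\bar f_\alpha(X)}=\Sigma_p$, so $\bar f_\alpha$ is surjective and therefore so is $q\bar f_\alpha$. By \propref{onto2} every coordinate of $G$ is essential, i.e.\ $G$ is an essential map.

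Injectivity is where the identification $x\sim -x$ must be controlled, and this is the genuine new difficulty: $G$ can collapse $a\ne b$ only if some coordinate satisfies $\bar f_\alpha(a)=-\bar f_\alpha(b)$, a collision that never occurs for the solenoid embedding itself. I would rule it out in three cases, using \remref{rem} (so that $\bar f_\alpha(X)\subset\mathbb R_+$) together with the arithmetic of the values $z_\alpha$. For two points of $\nu X$, choose basis sets $U\ni a$, $U'\ni b$ with $\bar U\cap\bar U'=\emptyset$ and take the coordinate $\alpha=\gamma(U,U')$ from the proof of \theoref{main}: then $\bar f_\alpha(a)=z_\alpha$ and $\bar f_\alpha(b)=0$, and since $z_\alpha\ne 0$ we have $q(z_\alpha)\ne q(0)$. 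For $a\in X$ and $b\in\nu X$, pick $U\ni b$ with a disjoint partner $U'$; the same coordinate gives $\bar f_\alpha(b)=z_\alpha$ while $\bar f_\alpha(a)\in\mathbb R$. Here I use that the family in \propref{independent} may be chosen inside $A_p\setminus\mathbb Z$, so that $z_\alpha\notin\mathbb R$ (recall $\mathbb R\cap A_p=\mathbb Z$); hence neither $z_\alpha$ nor $-z_\alpha$ is real and cannot equal $\bar f_\alpha(a)$, so $q$ separates $a$ from $b$. Finally, for two points of $X$ the whole image lies in $\prod_{\mathcal A}\mathbb R_+$, on which $q$ is coordinatewise injective, since the ray contains no pair $\{t,-t\}$ except $t=0$; thus no collision arises there either.

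The remaining point, and the technical heart, is that the map of \theoref{main-p} is only guaranteed to be an embedding on $\nu X$, so I must still make $G$ injective on $X$. As in the proof of \theoref{main} I would approximate $F|_X$, in the limitation topology on $C(X,\mathbb R^\omega)$, by an embedding $g$ whose coordinates remain slowly oscillating and whose corona extensions agree with the $\bar f_\alpha$ on $\nu X$; then $G$ becomes an embedding on $X$ while staying unchanged on $\nu X$, so the case analysis above persists. The constraint absent in the solenoid case is that $g$ must be kept inside the product of rays $\prod_{\mathcal A}\mathbb R_+$, for a negative coordinate value would be folded by $q$ and could destroy injectivity on $X$; and one cannot simply translate into the positive cone, since that would spoil the slow--oscillation agreement on $\nu X$. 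It is precisely here that I expect to invoke finite dimensionality of $X$: the general--position (Baire category) argument producing a slowly oscillating embedding into the countable product of the one--dimensional continua $K_p$, subject to the positivity constraint, is the step that needs $\dim X<\infty$. Granting this approximation, $G$ is a continuous injection of the compact space $\bar X$ into $\prod_{\mathcal A}K_p$, hence an essential embedding, as required.
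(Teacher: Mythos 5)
Your proposal follows essentially the same route as the paper: the paper's (two-sentence) proof likewise composes the coordinates $\bar f_\alpha$ of Theorem~\ref{main-p} with the quotient $q:\Sigma_p\to K_p$, notes that the family $\{q\bar f_\alpha\}$ still separates disjoint closures of basis sets by the argument of Proposition~\ref{separ} (which works because $q^{-1}(q(0))=\{0\}$ while $\bar f_\alpha$ sends the two closures to $z_\alpha\ne 0$ and to $0$), and deduces essentiality of each coordinate from Proposition~\ref{onto2}. You in fact supply strictly more detail than the paper, whose proof silently omits both the surjectivity check needed to invoke Proposition~\ref{onto2} and the positivity-constrained approximation step restoring injectivity on $X$ that you correctly identify as the point where the hypothesis $\dim X<\infty$ should enter.
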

\begin{proof}
The embedding follows from the fact that the family of functions $\mathcal C=\{q\bar f_\alpha\mid\alpha\in\mathcal A\}$, where $q:\Sigma_p\to K_p$ is the quotient map, separates disjoint closures of
basis sets by the argument of Proposition~\ref{separ}. 

In view of Proposition~\ref{onto2} all maps $q\bar f_\alpha$ are essential.
\end{proof}

\end{document}